\newcommand{\what}{\widehat}%
\newcommand{\wtilde}{\widetilde}%
\newcommand{\R}{\mathbb R}%
\newcommand{\C}{\mathbb C}%
\newcommand{\liea}{\mathfrak a}
\newcommand{\N}{\mathbb N}%
\newcommand{\hc}{\mathsf{c}}
\newcommand{\hb}{\mathsf{b}}
\newcommand{\m}{\mathfrak  }
\newcommand{\no}{\nonumber}
\newcommand{\la}{\lambda}
\newenvironment{rcases}
  {\left.\begin{aligned}}
  {\end{aligned}\right\rbrace}
\newtheorem{thm}{Theorem}[section]
\newtheorem{rem}[thm]{Remark}
\newtheorem{proposition}[thm]{Proposition}
\theoremstyle{definition}
\theoremstyle{remark}
\theoremstyle{remark}
\numberwithin{equation}{section}
\begin{document}

\author[M. Naik]{Muna Naik}
\address[Muna Naik]{
Indian Institute of Science, CV Raman Rd, Bengaluru, Karnataka, 560012
India }
\email{mnaik41@gmail.com, muna@iisc.ac.in}

\author[S. K. Ray]{Swagato K. Ray}
\address[Swagato K. Ray]{Stat-Math Unit, Indian Statistical
Institute, 203 B. T. Rd., Calcutta 700108, India}\email{swagato@isical.ac.in}

\author[J. Sarkar]{Jayanta Sarkar}
\address[Jayanta Sarkar]{Department of Mathematics and Statistics,  Indian Institute of Science Education and Research Kolkata, Mohanpur-741246, Nadia, West Bengal, India}
\email{jayantasarkarmath@gmail.com, jayantasarkar@iiserkol.ac.in}

\title[Asymptotic behaviour of  solutions ]{$L^p$-asymptotic behaviour of  solutions of the heat equation on Riemannian symmetric spaces of noncompact type}

\subjclass[2020]{Primary 43A85; Secondary 22E30, 58J35, 35B40}
\keywords{Noncompact symmetric space, Heat equation, Asymptotic behaviour}

\maketitle

\begin{abstract}
For Riemannian symmetric spaces $X=G/K$ of noncompact type, we show that for all left $K$-invariant $f\in L^1(X)$, the functions $\|h_t\|_{L^p(X)}^{-1}(f\ast h_t-M_p(f)h_t)$ (with $h_t$ being the heat kernel of $X$) converges to zero in $L^p(X)$, $p\in [1,\infty]$, as $t\to\infty$, with the constant $M_p(f)$ depending only on $p$ and $f$. We also prove an analogous result for the fractional heat kernels $h_t^{\alpha}$, $\alpha\in (0,1)$. The above results have recently been proved for the important special cases $p=1$ and $\alpha= 1,\frac 12$.
\end{abstract}
\baselineskip18pt
\section{Introduction and main result}

This article mainly deals with generalizations of two results, proved initially for $L^p(\R^n)$.
The first such result was proved in \cite[Theorem 2.1]{Vaz-3} (see also \cite{Vaz-1}): if $f\in L^1(\R^n)$ then for all $p\in [1,\infty]$,
\begin{equation}\label{vrn}
\lim_{t\to\infty}\frac{\|f\ast h_t-M(f)h_t\|_{L^p(\R^n)}}{\|h_t\|_{L^p(\R^n)}}=0,
\end{equation}
where
\begin{equation*}
M(f)=\int_{\R^n}f(x)\,dx,\:\:\:\:\:\:h_t(x)=(4\pi t)^{-\frac{n}{2}}e^{-\frac{\|x\|^2}{4t}},\:\:x\in\R^n,
\end{equation*}
is the standard heat kernel of $\R^n$. Analogous results for the fractional heat equation on $\R^n$ was also proved by Vázquez \cite{Vaz-1}. We note that using the Fourier transform on $\R^n$ we can write $M(f)=\widehat{f}(0)$.

We refer the reader to Subsection \eqref{pe1} for all unexplained notations. We consider a  Riemannian symmetric spaces $X=G/K$ of noncompact type, where $G$ is a connected, noncompact, semisimple Lie group with finite center, and $K$ is a maximal compact subgroup. We will show that the exact analogue of \eqref{vrn} does not hold in this case for $p>1$.  Precisely, in order to establish an exact analogue of \eqref{vrn}, it is necessary to assume that \(f\) is \(K\)-biinvariant and to replace $M(f)$ by $M_p(f)$, the spherical Fourier transform $\what f(i \gamma_p \rho)$ of $f$ at $i \gamma_p \rho$, where $\displaystyle{\gamma_p=\frac{2}{p}-1}$, which depends on $p$. We have been dictated by the estimates for $L^p$ norms of the heat kernel $h_t$ of $X$ (see \cite[Proposition 4.1.1]{AL}): for all $t\in (1,\infty)$,
\begin{equation}
\label{ehe-1}
   \|h_t\|_p \asymp \left\{\begin{array}{lr}
        t^{-\frac{\ell}{2p'}}e^{-\frac{4}{pp'}|\rho|^2t}, & 1 \le p<2,\\
        t^{-\frac{\nu}{4}}e^{-{|\rho|^2t}}, &  p=2,\\
        t^{-\frac{\nu}{2}} e^{-{|\rho|^2t}}, &   2 < p \le \infty,
        \end{array}\right.
\end{equation}
where $\nu= \ell + 2|\Sigma_r^+|$, $\ell$ is the rank of $X$, and $|\Sigma_r^+|$ is the number of positive indivisible roots. These estimates can be obtained from the following global estimate of the heat kernel $h_t$ \cite{AL, AO}: for all $t>0$ and $H \in \overline{\m a_+}$
\begin{eqnarray}
\label{hes-1}
h_t(\exp H)
&\asymp& t^{-\frac n2}\left\lbrace{\prod_{\alpha \in \Sigma_r^+}}\left((1+\langle \alpha, H \rangle)(1+t+\langle \alpha, H \rangle)^{\frac{m_\alpha+m_{2\alpha}}{2}-1} \right) \right\rbrace\no\\&&\:\:\:\times e^{-|\rho|^2t-\langle \rho, H\rangle-\frac{|H|^2}{4t}}.
\end{eqnarray}
The main point to note here is that $\|h_t\|_p$, unlike in Euclidean spaces, is essentially independent of $p$ if $p\in(2,\infty]$.

Now we are in a position to state our main results. The first main result of this paper is the following.
\begin{thm}
\label{cl16}
\begin{enumerate}
\item [(a)] If $p\in [1,2]$ and $f \in \mathcal L^p(G//K)$, then
\begin{equation}
\lim_{t \to \infty}\|h_t\|_{L^p(X)} ^{-1}\|f \ast h_t -\what f(i \gamma_p \rho)h_t\|_{L^p(X)}=0.
\end{equation}
\item[(b)] If $p\in (2,\infty]$ and $f \in \mathcal L^2(G//K)$, then
\begin{equation}
\lim_{t \to \infty}\|h_t\|_{L^p(X)} ^{-1}\|f \ast h_t -\what f(0)h_t\|_{L^p(X)}=0.
\end{equation}
\end{enumerate}
\end{thm}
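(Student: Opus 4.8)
The estimate \eqref{ehe-1} already signals that the two ranges of $p$ must be handled by genuinely different mechanisms. Inserting the global bound \eqref{hes-1} into $\int_X|h_t|^p$ and using the polar density $\asymp e^{2\rho(H)}$, the integrand is $\asymp e^{-p|\rho|^2t}\,e^{(2-p)\rho(H)-\frac{p}{4t}|H|^2}$ times a polynomial; for $1\le p<2$ the exponent is maximised at
\[
H_t^{\ast}=\tfrac{2(2-p)}{p}\,t\rho ,
\]
so the $L^p$ mass of $h_t$ \emph{escapes to infinity} along $\rho$, whereas for $p>2$ it stays near the origin. This dichotomy both forces the two different normalising constants and tells us where to look. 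The borderline case $p=2$ is immediate from the spherical Plancherel theorem: since $\what{f\ast h_t}(\lambda)=\what f(\lambda)\,e^{-t(|\lambda|^2+|\rho|^2)}$,
\[
\frac{\|f\ast h_t-\what f(0)h_t\|_2^2}{\|h_t\|_2^2}=\frac{\int_{\mathfrak{a}^*}|\what f(\lambda)-\what f(0)|^2e^{-2t|\lambda|^2}|c(\lambda)|^{-2}\,d\lambda}{\int_{\mathfrak{a}^*}e^{-2t|\lambda|^2}|c(\lambda)|^{-2}\,d\lambda}\longrightarrow0 ,
\]
because the numerator concentrates at $\lambda=0$, where $\what f(\lambda)-\what f(0)$ vanishes.

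For part (a) with $1\le p<2$ the plan is to prove a sharp pointwise asymptotic of $f\ast h_t$ on the concentration window and then integrate. Writing the spherical convolution as $f\ast h_t(\exp H)=\int_{\mathfrak{a}_+}f(\exp Y)\,\mathcal K_t(Y,H)\,\delta(Y)\,dY$ with the $K$-average $\mathcal K_t(Y,H)=\int_K h_t(\exp(-Y)\,k\,\exp H)\,dk$, the key lemma I would establish is that, uniformly for $H$ in the window $\{|H-H_t^{\ast}|\lesssim\sqrt t\}$,
\[
\frac{\mathcal K_t(Y,H)}{h_t(\exp H)}\longrightarrow\phi_{i\gamma_p\rho}(\exp Y)\qquad(t\to\infty).
\]
This is a Laplace/saddle-point analysis of the $K$-integral through \eqref{hes-1}: because $H\sim\tfrac{2(2-p)}{p}t\rho$, \emph{both} the linear term $\rho(\cdot)$ and the Gaussian term $|\cdot|^2/4t$ in the exponent of \eqref{hes-1} contribute finite factors in the limit, through the Cartan-projection asymptotics $a_+(\exp(-Y)k\exp H)=H+\beta(Y,k)+o(1)$, and the remaining integration over $k\in K$ reconstructs the Harish--Chandra integral representation of the spherical function at the spectral parameter $i\gamma_p\rho$ --- the parameter being selected precisely by the location $H_t^{\ast}$ of the $L^p$-concentration. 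Granting the lemma, and dominating the $Y$-integral by the critical (hence borderline-integrable) bound $\phi_{i\gamma_p\rho}\in L^{p',\infty}$ that guarantees convergence of $\what f(i\gamma_p\rho)=\int_G f\,\phi_{i\gamma_p\rho}$, one obtains $f\ast h_t(\exp H)=h_t(\exp H)\bigl(\what f(i\gamma_p\rho)+o(1)\bigr)$ on the window.

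It then remains to convert this into the $L^p$ statement. Splitting $\|f\ast h_t-\what f(i\gamma_p\rho)h_t\|_p^p$ into the contribution of the window and that of its complement, the window contributes $o(h_t(\exp H)^p)$ pointwise and hence $o(\|h_t\|_p^p)$ after integration (the window carries the full $L^p$ mass, by the same Laplace computation that located $H_t^{\ast}$), while the complement carries asymptotically negligible mass and is disposed of by crude bounds from \eqref{hes-1} together with the sharp normalisation \eqref{ehe-1}. For part (b), $p\in(2,\infty]$, the origin-concentration and the hypothesis $f\in\mathcal L^2(G//K)$ let me argue in frequency: from the inverse spherical transform and $|\phi_\lambda|\le\phi_0\le1$ for real $\lambda$ one gets the pointwise bound $|f\ast h_t(x)-\what f(0)h_t(x)|\le\phi_0(x)\,R(t)$ with $R(t)=c\,e^{-t|\rho|^2}\int_{\mathfrak{a}^*}|\what f(\lambda)-\what f(0)|\,e^{-t|\lambda|^2}|c(\lambda)|^{-2}\,d\lambda=o\bigl(t^{-\nu/2}e^{-t|\rho|^2}\bigr)$, the gain coming from the vanishing of $\what f(\lambda)-\what f(0)$ at $0$ against $|c(\lambda)|^{-2}\asymp|\lambda|^{\nu-\ell}$; since $\phi_0\in L^p(X)$ for every $p>2$, dividing by $\|h_t\|_p\asymp t^{-\nu/2}e^{-t|\rho|^2}$ yields the claim.

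The main obstacle is the key lemma of the second paragraph: making the saddle-point asymptotic of the $K$-averaged, translated heat kernel uniform over a $\sqrt t$-window that itself recedes to infinity, which requires precise control of the Cartan-projection asymptotics and a careful justification of the interchange of the limit with the $K$- and $Y$-integrations. A secondary, functional-analytic point is the borderline integrability of $\phi_{i\gamma_p\rho}$, equivalently the continuity of the functional $f\mapsto\what f(i\gamma_p\rho)$ on $\mathcal L^p$, which I would handle by a Lorentz-space refinement together with a reduction to a dense subclass with compactly supported spherical transform.
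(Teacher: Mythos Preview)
Your treatment of $p=2$ is correct and coincides with the paper's Proposition~\ref{p=2}. Your argument for part~(b) is also essentially correct and matches the alternative proof recorded in Remark~\ref{withoutkz}; note, however, that the paper's primary proof of (b) is different and shorter: it uses the semigroup identity $h_t=h_{t/2}\ast h_{t/2}$, the Kunze--Stein inequality $\|\psi_1\ast\psi_2\|_p\lesssim\|\psi_1\|_2\|\psi_2\|_2$ for $p>2$, and the norm comparison $\|h_{t/2}\|_2^2\asymp\|h_t\|_p$ to reduce immediately to the case $p=2$.

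The genuine gap is in part~(a) for $1\le p<2$. Your ``key lemma'' $\mathcal K_t(Y,H)/h_t(\exp H)\to\varphi_{i\gamma_p\rho}(\exp Y)$ cannot be extracted from \eqref{hes-1}: that estimate is only a two-sided $\asymp$ bound with unspecified constants, so it gives no information about limits of ratios of heat-kernel values. To get an actual limit you would need an \emph{exact} leading-order asymptotic of $h_t$ (with identified constants) together with uniform control of the Cartan projection $\exp(-Y)k\exp H\mapsto (\cdot)^+$ on a window of width $\sqrt t$ receding to infinity; neither ingredient is available in the paper, and neither is easy. Your domination step has the same defect: bounding $\mathcal K_t(Y,H)/h_t(\exp H)\lesssim\varphi_{i\gamma_p\rho}(\exp Y)$ uniformly in $t$ and in $H$ over the window is another statement about ratios that \eqref{hes-1} cannot deliver.

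The paper bypasses these difficulties entirely by working on the spectral side, where the heat kernel is exact: $\what h_t(\lambda)=e^{-t(|\lambda|^2+|\rho|^2)}$. After an $L^p$-concentration result (Proposition~\ref{cl5}, proved from \eqref{hes-1} only because here one needs upper bounds, not limits), the paper reduces by density and the Herz majorisation \eqref{hqwe1} to $f\in C_c^\infty(G//K)$, writes $f\ast h_t(\exp H)-\what f(i\gamma_p\rho)h_t(\exp H)$ via spherical Fourier inversion, inserts the Harish--Chandra expansion \eqref{es1}--\eqref{es2}, shifts the contour, and obtains an explicit expansion \eqref{lts-1}--\eqref{bs6} whose terms are estimated directly. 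This yields a quantitative pointwise bound on the critical window (see \eqref{finalest}) that, after integration, gives the $L^p$ statement. The moral is that the precise constant $\what f(i\gamma_p\rho)$ emerges from the contour shift $\lambda\mapsto\lambda+iH/2t$ evaluated at the concentration point $H\approx 2t\gamma_p\rho$, not from a physical-space saddle analysis of the heat kernel.
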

\begin{rem}
  \textup{We refer the reader to \eqref{funspace} and the subsequent discussion for the definition of $\mathcal{L}^p(G//K)$, as well as for the inclusion $\mathcal{L}^p(G//K) \ast L^p(G//K) \subset L^p(G//K)$ for all $p \in [1,2]$. We also refer to \eqref{point} and the subsequent discussion in the proof of Theorem~\ref{cl16} (b) for the explanation that for all large $t$, $f \ast h_t \in L^p(G//K)$ for all $p \in (2,\infty]$, whenever $f \in \mathcal{L}^2(G//K)$.
}
 \end{rem}
Using Theorem \ref{cl16} we then proceed to prove the following analogue of Theorem \ref{cl16} for the fractional heat kernels $h_t^{\alpha}$, $\alpha\in (0,1)$.
\begin{thm}
\label{fcl16}
\begin{enumerate}
\item [(a)] If $p\in [1,2]$ and $f \in \mathcal L^p(G//K)$, then
\begin{equation}
\lim_{t \to \infty}\|h_t^\alpha\|_{L^p(X)} ^{-1}\|f \ast h_t^\alpha -\what f(i \gamma_p \rho)h_t^\alpha\|_{L^p(X)}=0.
\end{equation}
\item[(b)] If $p\in (2,\infty]$ and $f \in \mathcal L^2(G//K)$, then
\begin{equation}
\lim_{t \to \infty}\|h_t^\alpha\|_{L^p(X)} ^{-1}\|f \ast h_t^\alpha -\what f(0)h_t^\alpha\|_{L^p(X)}=0.
\end{equation}
\end{enumerate}
\end{thm}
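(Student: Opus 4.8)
The plan is to deduce Theorem \ref{fcl16} from Theorem \ref{cl16} by subordination. Recall that the fractional heat kernel arises from the heat kernel through $h_t^\alpha=\int_0^\infty h_s\,\varphi_{t,\alpha}(s)\,ds$, where $\varphi_{t,\alpha}\ge 0$ is the density of the $\alpha$-stable subordinator, governed by its scaling $\varphi_{t,\alpha}(s)=t^{-1/\alpha}\varphi_{1,\alpha}(t^{-1/\alpha}s)$ and Laplace transform $\int_0^\infty e^{-\lambda s}\varphi_{t,\alpha}(s)\,ds=e^{-t\lambda^\alpha}$, $\lambda\ge 0$. Since convolution with $f$ commutes with the $s$-integral, for either choice of the constant $M_p(f)$ (namely $\what f(i\gamma_p\rho)$ for $p\in[1,2]$ and $\what f(0)$ for $p\in(2,\infty]$) we may write
\[
 f\ast h_t^\alpha-M_p(f)h_t^\alpha=\int_0^\infty\bigl(f\ast h_s-M_p(f)h_s\bigr)\varphi_{t,\alpha}(s)\,ds,
\]
so Minkowski's integral inequality gives $\|f\ast h_t^\alpha-M_p(f)h_t^\alpha\|_p\le\int_0^\infty\|f\ast h_s-M_p(f)h_s\|_p\,\varphi_{t,\alpha}(s)\,ds$.

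The heart of the matter is a two-sided control of $\|h_t^\alpha\|_p$. First I would record the large-$t$ asymptotics $\|h_t^\alpha\|_p\asymp t^{-\ell/(2p')}e^{-(\frac{4}{pp'}|\rho|^2)^\alpha t}$ for $p\in[1,2)$, $\asymp t^{-\nu/4}e^{-|\rho|^{2\alpha}t}$ for $p=2$, and $\asymp t^{-\nu/2}e^{-|\rho|^{2\alpha}t}$ for $2<p\le\infty$. The upper bound follows from the Minkowski inequality above combined with the identity $s^{-\beta}=\Gamma(\beta)^{-1}\int_0^\infty\tau^{\beta-1}e^{-s\tau}\,d\tau$ and the Laplace transform of $\varphi_{t,\alpha}$: inserting the estimate \eqref{ehe-1} for $\|h_s\|_p$ reduces $\int_0^\infty\|h_s\|_p\varphi_{t,\alpha}(s)\,ds$ to $\Gamma(\beta)^{-1}\int_0^\infty\tau^{\beta-1}e^{-t(c_p+\tau)^\alpha}\,d\tau$, with $c_p$ the exponential rate and $\beta$ the corresponding power in \eqref{ehe-1}, whose $t\to\infty$ behaviour is extracted by Laplace's method about $\tau=0$. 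The lower bound is the delicate point, since Minkowski yields only one inequality and the reverse estimate requires ruling out cancellation in the superposition. I would obtain it from the positivity of the heat kernel together with the sharp global bound \eqref{hes-1}, using the latter to derive a matching pointwise lower bound for $h_t^\alpha(\exp H)$ by Laplace's method in the $s$-variable and then integrating; for $p=\infty$ this is immediate, since $\|h_t^\alpha\|_\infty=h_t^\alpha(e)=\int_0^\infty\|h_s\|_\infty\varphi_{t,\alpha}(s)\,ds$ is an exact identity.

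With these asymptotics in hand I would finish by splitting the $s$-integral at a fixed threshold $S$. Given $\eps>0$, Theorem \ref{cl16} furnishes $S$ so that $\|f\ast h_s-M_p(f)h_s\|_p\le\eps\|h_s\|_p$ for all $s>S$; hence the tail contributes at most $\eps\int_0^\infty\|h_s\|_p\varphi_{t,\alpha}(s)\,ds$, which by the upper-bound computation is $\lesssim\eps\,\|h_t^\alpha\|_p$. For the head, $\int_0^S\|f\ast h_s-M_p(f)h_s\|_p\varphi_{t,\alpha}(s)\,ds\le C(S)\int_0^S\varphi_{t,\alpha}(s)\,ds$, and the scaling of $\varphi_{t,\alpha}$ gives $\int_0^S\varphi_{t,\alpha}(s)\,ds=\int_0^{St^{-1/\alpha}}\varphi_{1,\alpha}(u)\,du\lesssim e^{-c\,t^{1/(1-\alpha)}}$ from the super-exponential decay of $\varphi_{1,\alpha}$ near the origin; since $1/(1-\alpha)>1$, this is negligible against the exponential lower bound for $\|h_t^\alpha\|_p$. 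Dividing by $\|h_t^\alpha\|_p$ and letting $t\to\infty$ and then $\eps\to 0$ yields the claim. In the range $p\in(2,\infty]$ one uses $M_p(f)=\what f(0)$ with $f\in\mathcal L^2(G//K)$, first checking, exactly as in the Remark after Theorem \ref{cl16}, that $f\ast h_t^\alpha\in L^p(G//K)$ for all large $t$. The main obstacle throughout is the lower bound on $\|h_t^\alpha\|_p$, that is, excluding cancellation in the subordination integral; the remaining ingredients are Minkowski's inequality, Theorem \ref{cl16}, and a standard Laplace-type asymptotic.
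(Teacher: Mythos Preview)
Your overall strategy for part (a) coincides with the paper's: write $h_t^\alpha$ via the subordination formula, apply Minkowski's integral inequality, split the $s$-integral into a head and a tail, invoke Theorem \ref{cl16} on the tail, and kill the head with the near-origin decay of the subordinator density. The paper splits at a moving threshold $Lt$ (with $L$ chosen so that $c_\alpha L^{-\alpha/(1-\alpha)}>b^\alpha$, $b$ being the exponential rate in $\|h_t\|_p$) rather than at a fixed $S$, and it simply cites the norm asymptotics \eqref{fehe-1} for $\|h_t^\alpha\|_p$ from \cite{Cow2} rather than re-deriving them as you propose; your Laplace-method sketch for the lower bound is plausible but more work than necessary here.

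Your head estimate, however, has a genuine slip for $p>1$. You claim
\[
\int_0^S\|f\ast h_s-M_p(f)h_s\|_p\,\varphi_{t,\alpha}(s)\,ds\le C(S)\int_0^S\varphi_{t,\alpha}(s)\,ds,
\]
but $\|f\ast h_s-M_p(f)h_s\|_p\lesssim\|h_s\|_p$ and $\|h_s\|_p\asymp s^{-n/(2p')}\to\infty$ as $s\to0^+$, so no finite $C(S)$ exists. The fix is immediate: bound the integrand by $C\,s^{-a'}\varphi_{t,\alpha}(s)$ for a suitable $a'>0$, change variables to $u=t^{-1/\alpha}s$, and observe that the factor $e^{-c_\alpha u^{-\alpha/(1-\alpha)}}$ in $\varphi_{1,\alpha}(u)$ absorbs any negative power of $u$; this is precisely what the paper does (with its cutoff at $Lt$, the same polynomial-versus-exponential balance appears in the $A_t$ estimate). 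But as written your step is incorrect.

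For part (b) the paper does \emph{not} repeat the subordination argument. Instead it uses the semigroup identity $h_t^\alpha=h_{t/2}^\alpha\ast h_{t/2}^\alpha$, the Kunze--Stein inequality \eqref{cks}, and the comparison $\|h_t^\alpha\|_p\asymp\|h_{t/2}^\alpha\|_2^2$ (the fractional analogue \eqref{ep10}) to reduce directly to the case $p=2$ of part (a), exactly as in the proof of Theorem \ref{cl16}(b). Your direct subordination route, feeding in Theorem \ref{cl16}(b), also works once the head estimate is repaired, but the Kunze--Stein reduction is shorter and sidesteps any worry about the small-$s$ behaviour of $\|f\ast h_s\|_p$ for $f\in\mathcal L^2(G//K)$.
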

\begin{rem}
\textup{The fundamental difference between (\ref{vrn}) and Theorem \ref{cl16} is that the constant $M(f)=\what f(0)$, has been replaced by $\what f(i\gamma_p\rho)$, $\what f(0)$, for $p\in [1,2]$ and $p\in (2,\infty)$ respectively. It is not hard to see that $\what f(i \gamma_p \rho)$, $\what f(0)$ are the unique complex numbers for which Theorem \ref{cl16} holds. This is due to the following observation: for two families of functions $\{f_t:t>0\}$ and $\{g_t:t>0\}$ in $L^p(X)$ if we have
 \begin{equation*}
     \lim_{t\to\infty}\|g_t\|_p^{-1}\|f_t-\zeta g_t\|_p=0,
 \end{equation*}
for some $\zeta\in\C$, then $\zeta$ is unique. Because if the above limit is zero for some other $z\in\C$ then using the triangle inequality, we get
\begin{equation*}
    \|g_t\|_p^{-1}\|f_t-z g_t\|_p\geq \|g_t\|_p^{-1}\left(|z-\zeta|\|g_t\|_p-\|f_t-\zeta g_t\|_p\right),
\end{equation*}
which implies that $|z-\zeta|=0$, by taking limit as $t\to\infty$. The situation is similar for Theorem \ref{fcl16}. }
\end{rem}
Using the above results we will show in Remark \ref{irem-1} that for each $\alpha\in (0,1]$, the family $\{\|h_t^\alpha\|_p^{-1}h_t^\alpha \mid t>0\}$, where $h_t^1=h_t$, is an extremizer for the norm of the convolution operator $T_f$ for non-negative $f\in \mathcal L^p(G//K)$, $p\in [1,2]$.

Although the case $p=1$ of Theorem \ref{cl16} has been proved in \cite[Theorem 1.2]{AE}, we include it here for the sake of completeness and for its use in the proof of Theorem \ref{fcl16}. The proof of Theorem \ref{cl16} for the case $p \in (1,2)$ uses the method of the first proof of \cite[Theorem 1.2]{AE} along with the $L^p$-concentration of $h_t$ on $X$ proved in \cite[Theorem 4.1.2]{AL}. At this point, one may wonder why the constant $\what{f}(0)$ works for all $p\in [2,\infty]$ in Theorem \ref{cl16}. This can be attributed to the estimates \eqref{ehe-1} which implies that for all large $t$,  $\|h_t\|_{L^p(X)}$ is comparable to $\|h_{t/2}\|_{L^2(X)}^2$ and the Kunze--Stein phenomenon (Theorem \ref{ksphn}).

In recent times, several generalizations and variants of (\ref{vrn}) have been proved in \cite{Vaz-2, AE, Eff1, Eff2}. We will mostly concentrate on the papers by Vázquez \cite{Vaz-2}, which proves analogue of (\ref{vrn}) on real hyperbolic spaces and by Anker et al. \cite{AE}, which proves analogue of (\ref{vrn}) for all Riemannian symmetric spaces of noncompact type.
Precisely, it was proved in \cite{Vaz-2, AE} that if $f\in L^1(X)$ is left $K$-invariant then
\begin{equation}\label{vav-1}
\lim_{t\to\infty}\|f\ast h_t-\what f(i\rho)h_t\|_{L^1(X)}=0,
\end{equation}
with $h_t$ being the heat kernel of the Riemannian manifold $X$.
Vázquez's argument in \cite{Vaz-2} uses $L^1$ concentration of $h_t$ for real hyperbolic spaces which was later generalized in \cite{AE}. However, in \cite{AE}, another proof of this result was given which uses the theory of spherical Fourier transform in a significant way. This Fourier analytic proof from \cite{AE} is of fundamental importance to us.

In \cite {Vaz-2, AE}, it has been explained in great detail why (\ref{vav-1}) does not hold true for functions $f$ defined on $X$ which are not left $K$-invariant. Hence, in this paper, we shall  restrict our discussion mostly to functions $f:G\rightarrow\C$ which are $K$-biinvariant.

In the above mentioned papers it has also been shown that the exact analogue of (\ref{vrn}) breaks down for $p=\infty$. However, for the case $p=\infty$, a weak analogue of \eqref{vrn} holds in the sense that for all $f\in L^1(X)$,
\begin{equation}\label{infcase}
\|f\ast h_t-\what f(i\rho)h_t\|_{L^{\infty}(X)}\leq 2\|f\|_{L^1(X)}\|h_t\|_{L^{\infty}(X)}.
\end{equation}
Because of this weak $L^{\infty}$ case, both the papers \cite{Vaz-2, AE} could only prove a weak analogue of (\ref{vrn}) of the following kind:
for all $p\in (1,\infty)$,
\begin{equation}\label{vav}
\lim_{t\to\infty}\frac{\|f\ast h_t-\what f(i\rho)h_t\|_{L^p(X)}} {\|h_t\|_{L^{\infty}(X)}^{1/p'}}=0.
\end{equation}
From the estimates of $\|h_t\|_{L^p(X)}$, given in \cite[p. 1070]{AL} (see also \eqref{ehe-1}), it is clear that
\begin{equation}\label{vp}
\lim_{t\to\infty}\frac{\|h_t\|_{L^p(X)}}{\|h_t\|_{L^{\infty}(X)}^{1/p'}}=0,\:\:\:\:p\in (1,\infty).
\end{equation}
From \eqref{vp}, the relation \eqref{vav} follows simply by using the Young's inequality. In fact, the relation \eqref{vav} holds true even if $f$ is not left $K$-invariant and $\what f(i\rho )$ is replaced by any $z\in\C$. It is for this latter reason that we feel, for $p>1$, the result \eqref{vav} is not an exact analogue of \eqref{vrn}. 

Our viewpoint is that to obtain an exact analogue of (\ref{vrn}) it is essential to be able to replace the quantity $\|h_t\|_{L^{\infty}(X)}^{1/p'}$ in the denominator of (\ref{vav}) by $\|h_t\|_{L^p(X)}$. Theorem \ref{cl16}and Theorem \ref{fcl16} achieve that but at the cost of replacing $\what f(i\rho )$ by constants which depend on $f$ and $p$. The role of the constant can be easily seen for the case $p=2$. In this case, we will see that for all left $K$-invariant $f\in L^1(G/K)$ it follows from the Plancherel theorem that 
\begin{equation}\label{2}
\lim_{t\to\infty}\frac{\|f\ast h_t-\widehat{f}(0)h_t\|_{L^2(X)}}{\|h_t\|_{L^2(X)}}=0,
\end{equation}
(see Proposition \ref{p=2}). This is a significant departure from the Euclidean case. It also shows that in our main theorems we can allow $f$ to be in a much larger space of functions on $X$ than $L^1(X)$ (see (\ref{funspace})). We will also show that our $L^p$ versions have some bearing with the so called Herz criterion regarding norm of convolution operators on $L^p$ spaces (see Remark \ref{irem-1}).

To prove Theorem \ref{cl16} for the case $p\in (2,\infty]$, we shall use the Kunze--Stein phenomenon (see Theorem \ref{ksphn}). However, it can also be proved by an alternative method (see Remark \ref{withoutkz}). Using the case $p=2$ and the Kunze--Stein phenomenon, we shall obtain a genuine version of \eqref{vav} for $p\in (2,\infty]$ with $\what f(i\rho )$ replaced by $\what f (0)$. As has been mentioned before, we shall prove an  $L^p$-analogue of \eqref{vav-1} for the following class of left $K$-invariant functions on $X$, which contains all left $K$-invariant integrable functions on $X$. For $p \in [1, 2]$, we define
\begin{equation}\label{funspace}
\mathcal L^p(G//K)=\{ f : G \to \C   \text{ measurable and $K$-biinvariant} \mid \what{|f|}(i\gamma_p \rho )< \infty \}.
\end{equation}
Our motivation to consider these spaces of functions stems from the Herz criterion, which states that for a non-negative,  $K$-biinvariant, measurable function $f$ on $G$, the convolution operator $T_f(\psi)=f\ast \psi$, $\psi\in L^p(G//K)$, $p\in [1,2]$, is $L^p$ bounded if and only if
\begin{equation*}
\what{f}(i\gamma_p\rho )=\int_Gf(x)\varphi_{-i\gamma_p\rho}(x)\:dx<\infty,
\end{equation*}
and in this case the norm $|||T_f|||_p$ of the operator $T_f$ equals $\what f (i\gamma_p \rho)$ \cite[Theorem 7]{Her} (see also \cite{Cow-h}). In particular, for $f\in \mathcal L^p(G//K)$, $\psi\in L^p(G//K)$, $p\in [1,2]$, it follows that
\begin{equation}
\label{hqwe1}
\|f \ast \psi \|_{L^p(G)} \le \what{|f|}(i\gamma_p \rho ) \|\psi\|_{L^p(G)}.
\end{equation}
It also follows from the well-known estimates of the spherical functions (see \eqref{spest}) that $\displaystyle{L^q(G//K)\subset \mathcal L^p (G//K)}$, $1\leq q<p\leq 2$.

\begin{rem}
    \textup{After completing the work for this paper, we became aware of \cite{Eff3} which proves Theorem \ref{cl16} for a subclass of functions $f\in L^1(X)$ without the assumption of left $K$-invariance.}
\end{rem}

We now talk about another result of similar nature which was proved for $\R^n$ in \cite[Proposition 3.10]{Net}: if for $r\in(0,\infty)$, $\displaystyle{m_r=\frac{\chi_{B(0,r)}}{|B(0,r)|} }$, where $B(0,r)$ is the open ball of radius $r$ centered at the origin, then for any complex Borel measure $\mu$ on $\R^n$,
\begin{equation}\label{hn}
\lim_{r\to\infty}\|\mu\ast m_r-\mu(\R^n)m_r\|_{L^1(\R^n)}=0.
\end{equation}
We will demonstrate that the exact analogue of \eqref{hn} does not hold in the case of $X$. Specifically, we will show that when the rank of $X$ is one, for a left $K$-invariant, non-negative and nonzero $f \in L^1(X)$,
\begin{equation}
	\label{cer1}
	\displaystyle\lim_{r \to \infty}\|m_r\|_{L^p(X)}^{-1}\|f \ast m_r- z\,m_r\|_{L^p(X)}\neq 0,
\end{equation}
for all $p\in[1, \infty]$, and all $z \in \C$ (see Subsection \ref{ballc}).

The organization of this paper is as follows. In Section \ref{prelim}, we establish notation, and all required preliminaries are gathered here. Section \ref{hk561} deals mainly with the results related to the heat kernel including the proof of Theorem \ref{cl16}. We prove Theorem \ref{fcl16}  in Section \ref{sef1} and also show that Theorem \ref{cl16} and  Theorem \ref{fcl16} are not true without the assumption that $f$ is $K$-biinvariant. In Section \ref{opc}, we discuss optimality of some of our results (Proposition \ref{sness-1}) and prove \eqref{cer1} for rank one symmetric space of noncompact type.

\section{Preliminaries}\label{prelim}
In this section we shall establish notation and collect all the ingredients required for this paper.
\subsection{Basic Notations}  The letters  $\R,\, \R^+, \,\C$ and $\N$ denote respectively  the set of  real numbers, positive real numbers, complex numbers and natural numbers. For $z\in \C$,  $\Re z$ and $\Im z$  denote respectively the real and imaginary parts of $z$. For a real number $a$, let $a_+$ denote the non-negative part of $a$ defined by $a_+:= \max{\{a, 0\}}$. For $1 < p < \infty$, let $p':=\frac p{p-1}$ be the conjugate exponent of $p$. For $p=1$, we define $p'=\infty$ and for $p=\infty$, we define $p'=1$. For $1 \le p <  \infty$, let $\displaystyle{\gamma_p=\frac{2}{p}-1}$ and let $\gamma_\infty=-1$. For a measure space $Y$,  let $L^p(Y)$ denote the usual Lebesgue spaces over $Y$. We denote by $\|f\|_p$ the $L^p$ norm  of $f\in L^p(Y)$, when there is no risk of confusion about the underlying measure space. Everywhere in this article the symbol $f_1\asymp f_2$ for two positive expressions $f_1$ and $f_2$
means that there are positive constants $C_1, C_2$ such that $C_1f_1\leq f_2\leq C_2f_1$. Similarly the symbol $f_1\lesssim f_2$ (respectively $f_1\gtrsim f_2$)  means that there exists a positive constant $C$ such that $f_1\leq Cf_2$ (respectively $f_1\geq Cf_2$). We write $C_{\varepsilon}$ for a constant to indicate its dependency on the parameter $\varepsilon$.
For  a set $A$ in a topological space, $\overline{A}$ is  its closure and for a measurable set $A$ in a measure space, $|A|$ denotes its measure. For a topological space $E$,  let $C(E)$ and $C_c(E)$ denote the space of all continuous functions and compactly supported continuous functions on $E$ respectively.

\subsection{Symmetric space}\label{pe1}
Let $X$ be a Riemannian symmetric space of noncompact type. That is $X=G/K$ where $G$ is a noncompact connected semisimple Lie group with finite
center and $K$ is a maximal compact subgroup of $G$. Let $\mathfrak{g}$ and $\mathfrak{k}$ denote the Lie algebra of $G$ and $K$ respectively. Let $\Theta$ be the corresponding Cartan involution and $\mathfrak{g} =\mathfrak{k} \oplus \mathfrak {p}$ be the corresponding Cartan decomposition. We fix a maximal abelian subspace $\mathfrak{a}$ of $\mathfrak{p}$. Let $\ell$ denote the dimension of $\m a$, called the rank of $X$ as well as the real rank of $G$. Let $\Sigma$ denote the set of restricted roots of the pair $(\mathfrak{g, a})$ and $W$ be the Weyl group associated with $\Sigma$.  For $\alpha \in \Sigma$, let $\m g_{\alpha}$ be the root space corresponding to the root $\alpha$ and $m_\alpha$ be the dimension of $\m g_{\alpha}$. Fix a positive Weyl chamber $\mathfrak{a}_+ \subset \mathfrak{a}$. Let $\Sigma^+$ denote the corresponding set of positive roots and and $\rho= \frac 1 2\displaystyle\sum_{\alpha \in \Sigma^+} m_\alpha \alpha$. Let $\Sigma_r^+$ and $\Sigma_s^+$ denote the set of positive indivisible roots and positive simple roots respectively. Let
\begin{equation}
n=\ell+ \displaystyle\sum_{\alpha \in \Sigma^+} m_{\alpha},\hspace{0.5cm}\nu= \ell + 2|\Sigma_r^+|.
\end{equation}
The constant $n$ is equal to the dimension of the manifold $X$ and the constant $\nu$ will be referred to as the dimension at infinity.

 Let $\mathfrak{n}= \displaystyle\sum_{\alpha \in \Sigma^+} \mathfrak {g}_{\alpha}$ be  the nilpotent Lie subalgebra associated with $\Sigma^+$. Let $A$ and $N$ denote the analytic subgroups of $G$ with Lie algebras   $\m a$ and $\m n$ respectively, precisely $A= \exp\m a$ and $N= \exp \m n$. For $a \in A$, let $\log(a)$ be the element of $\m a$ such that $\exp (\log a)=a$. Let $M$ be the  centralizer of $A$ in $K$. The groups $M$ and $A$ normalize $N$. We note that $K/M$ is the Furstenberg boundary of $X$. For the group $G$, we  have the Iwasawa decomposition
$
G= NAK,
$
that is, every $g\in G$
can be uniquely written as
\[
g=n(g)\exp A(g)k(g), \,\,\,\, n(g)\in N, A(g)\in \mathfrak{a}, k(g)\in K,
\]
and the map
$
(n, a, k) \mapsto nak
$
is a  diffeomorphism of $N\times A \times K$ onto $G$.  Via the Cartan decomposition i.e. $G= K (\exp \overline{\m a_+})K$,
each $g\in G$ can be written as
\begin{equation}
\label{ps1}
g=k_1 (\exp H) k_2,
\end{equation}
for some $k_1, k_2 \in K$ and a unique $H \in \overline{\m a_+}$.
Let $g^+$ denote the middle component of $g$  in the Cartan decomposition i.e. $H$ in the decomposition \eqref{ps1}.

Let $B$ denote the Killing form of
$\m g$. It is known that $B|_{\m{p\times p}}$ is positive definite and hence induces an inner product and a
norm $|\cdot|$ on $\m p$. Since the tangent space of the homogeneous space $X= G/K$ at the base point $o=eK$ can be identified with $\m p$, it has a $G$-invariant Riemannian structure induced from the  form $B|_{\m{p\times p}}$. Let $\Delta$ be the Laplace--Beltrami operator on $X$ associated to this Riemannian structure. Using non-degeneracy of the positive bilinear form $B|_{\m {a \times a}}$, we identify $ \m a$ with $\m a^*$.
Let $\mathfrak{a}_\C$ denote the complexification of $\mathfrak{a}$.
Then $\mathfrak{a}_\C$ can be naturally identified with $\C^\ell$ and the inner product $B|_{\m {a \times a}}$ on $\m a$ can be extended  to a real bilinear  map on $\mathfrak{a}_\C$. We also extend the action of the Weyl group $W$ on $\m a_{\C}$ from $\m a$.

As usual on the compact group $K$ (respectively on $K/M$), we fix the normalized Haar measure $dk$ (respectively the normalized $K$-invariant measure $dk_M$) and let $dn$ denote a Haar measure on $N$. The following integral formulae describe the Haar measure of $G$ in terms of the  Iwasawa and Cartan decomposition.
For any $f\in C_c(G)$,
\begin{eqnarray}
\int_{G}{f(g)\,  dg} &= &\int_{K}\int_{A}\int_{N}f(nak)e^{-2 \langle \rho, \log(a)\rangle}\, dn \,da\,dk\\
&=&\int_{K}{\int_{\overline{\m a+}}{\int_{K}{f(k_1(\exp g^+)k_2)  J(g^+)\,dk_1\,dg^+\,dk_2}}}, \label{ch1}
\end{eqnarray}
where $dg^+$ is the Lebesgue measure on $\R^\ell$ and
\begin{equation}
J(g^+)
 \asymp \prod_{\alpha\in \Sigma^+}\left( \frac{\langle \alpha, g^+ \rangle}{1+\langle \alpha, g^+ \rangle }\right)^{m_\alpha}
 e^{2\langle \rho, g^+\rangle}, \,\,\,\, \,g^+ \in \overline{\mathfrak{a}_+}.
 \label{ch2}
\end{equation}
For two suitable functions $f_1$ and $f_2$ on $G$, their convolution $f_1\ast f_2$ is defined as
\begin{equation*}
f_1\ast f_2(g)=\int_Gf_1(g_1)f_2(g_1^{-1}g)\:dg_1,\:\:\:\:\:g\in G.
\end{equation*}
An important result regarding convolutions on $G$ is the Kunze--Stein phenomenon:
\begin{thm}\cite[Theorem 2.2]{Cow1}\label{ksphn}
Suppose $G$ is a connected, noncompact, semisimple Lie group with finite center. Then for all $\phi\in L^p(G)$, $\psi\in L^2(G)$, where $p\in[1,2)$, there exists a constant $c_p>0$ such that
$$\|\phi\ast\psi\|_{L^2(G)}\leq c_p\|\phi\|_{L^p(G)}\|\psi\|_{L^2(G)}.$$
It then follows by duality that for all $f_j\in L^2(G)$, $j=1,2$ and for any $p\in (2,\infty]$, there exists a constant $C_p>0$ such that
\begin{equation}\label{cks}
\|f_1\ast f_2\|_{L^p(G)}\leq C_p \|f_1\|_{L^2(G)}\|f_2\|_{L^2(G)}.\end{equation}
\end{thm}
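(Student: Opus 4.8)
The statement is the Kunze--Stein phenomenon, so my plan is to reproduce a spherical--analytic proof of the first inequality and then deduce \eqref{cks} from it by duality. Since $|\phi\ast\psi|\le|\phi|\ast|\psi|$ pointwise and the $L^p$- and $L^2$-norms do not see the replacement of $\phi,\psi$ by $|\phi|,|\psi|$, I would first reduce to $\phi,\psi\ge0$. The decisive step is then to reduce to $K$-bi-invariant $\phi$: by Herz's majorization principle the operator norm of the convolution operator $T_\phi\colon\psi\mapsto\phi\ast\psi$ on $L^2(G)$ does not increase when $\phi$ is replaced by its bi-$K$-average $\phi^\natural(x)=\int_K\int_K\phi(k_1xk_2)\,dk_1\,dk_2$, while $\|\phi^\natural\|_p\le\|\phi\|_p$ follows from Minkowski's integral inequality and the bi-invariance of Haar measure. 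I expect this reduction to be the principal obstacle: for non-radial $\phi$ the norm of $T_\phi$ is governed by the matrix coefficients of tempered representations, whose pointwise size involves the $K$-type multiplicities, and it is exactly the passage to the Gelfand-pair (radial) setting that removes these and linearizes the problem.

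For $\phi\ge0$ that is $K$-bi-invariant, the convolution algebra of radial functions is commutative, and the spherical Plancherel theorem diagonalizes $T_\phi$: on the constituents of $L^2(G)$ carrying a $K$-fixed vector it acts as multiplication by the spherical transform $\widehat\phi(\lambda)=\int_G\phi(g)\varphi_{-\lambda}(g)\,dg$, $\lambda\in\mathfrak{a}^*$, and it annihilates the remaining constituents; hence
\begin{equation*}
\|T_\phi\|_{L^2(G)\to L^2(G)}=\sup_{\lambda\in\mathfrak{a}^*}|\widehat\phi(\lambda)|.
\end{equation*}
From the Harish-Chandra integral formula $\varphi_\lambda(g)=\int_Ke^{\langle i\lambda-\rho,\,A(kg)\rangle}\,dk$ one reads off $|\varphi_{-\lambda}(g)|\le\varphi_0(g)$ for all real $\lambda$, so that, by Hölder's inequality,
\begin{equation*}
\sup_{\lambda\in\mathfrak{a}^*}|\widehat\phi(\lambda)|\le\int_G\phi(g)\,\varphi_0(g)\,dg\le\|\phi\|_{L^p(G)}\,\|\varphi_0\|_{L^{p'}(G)}.
\end{equation*}

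The remaining point is that $\|\varphi_0\|_{p'}<\infty$ precisely for $p\in[1,2)$, that is, $p'>2$. Here the geometry enters through the Cartan integration formula \eqref{ch1}--\eqref{ch2}: together with the standard estimate $\varphi_0(\exp H)\asymp\big(\prod_{\alpha\in\Sigma_r^+}(1+\langle\alpha,H\rangle)\big)e^{-\langle\rho,H\rangle}$ for $H\in\overline{\mathfrak{a}_+}$, the integrand $\varphi_0(\exp H)^{p'}J(H)$ is comparable to a polynomial in $H$ times $e^{(2-p')\langle\rho,H\rangle}$, which is integrable over $\overline{\mathfrak{a}_+}$ exactly when $p'>2$. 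This yields the first inequality with $c_p:=\|\varphi_0\|_{p'}$, which blows up as $p\to2^-$, in accordance with the failure of the $L^2$ endpoint.

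Finally, \eqref{cks} follows by duality, using that $G$ is unimodular. For $f_1,f_2\in L^2(G)$ and $g\in L^{p'}(G)$ with $p'\in[1,2)$, writing $\widetilde{f_1}(x)=\overline{f_1(x^{-1})}$, a change of variables gives $\langle f_1\ast f_2,g\rangle=\langle f_2,\widetilde{f_1}\ast g\rangle$; the reversal identity $\|a\ast b\|_2=\|\widetilde{b}\ast\widetilde{a}\|_2$ and the fact that $h\mapsto\widetilde{h}$ is an isometry of every $L^q(G)$ reduce $\|\widetilde{f_1}\ast g\|_2$ to an application of the first inequality to $\widetilde{g}\ast f_1$, yielding $\|\widetilde{f_1}\ast g\|_2\le c_{p'}\|g\|_{p'}\|f_1\|_2$. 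Cauchy--Schwarz then gives $|\langle f_1\ast f_2,g\rangle|\le c_{p'}\|f_1\|_2\|f_2\|_2\|g\|_{p'}$, and taking the supremum over $\|g\|_{p'}\le1$ yields \eqref{cks} with $C_p=c_{p'}$. I would remark that Cowling's original proof proceeds differently, through the analytic continuation of the spherical principal series to a tube domain on which the representations are uniformly bounded, followed by Stein's complex interpolation between the elementary $L^1$ bound and the $L^2$ Plancherel bound; the spherical route sketched above is the more self-contained of the two.
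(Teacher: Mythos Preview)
The paper does not prove this theorem; it is quoted from \cite{Cow1} as a known result and used as a black box (in the proof of Theorem~\ref{cl16}(b) and in establishing \eqref{ep10}). There is therefore no paper proof to compare against, so I assess your argument on its own.

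There is a genuine gap precisely at the step you yourself flag as the principal obstacle, the reduction to $K$-biinvariant $\phi$. You assert that Herz's majorization gives $\|T_{\phi^\natural}\|_{2\to2}\le\|T_\phi\|_{2\to2}$, and then bound $\|T_{\phi^\natural}\|$ via the spherical transform. But to conclude anything about $\|T_\phi\|$ from a bound on $\|T_{\phi^\natural}\|$ you would need the \emph{opposite} inequality $\|T_\phi\|\le\|T_{\phi^\natural}\|$, and that inequality is false: if $\phi\ge0$ is an $L^1$-approximation to a unit point mass at some $g_0\notin K$, then $\|T_\phi\|_{2\to2}\to1$ (since $\lambda(g_0)$ is unitary), whereas $\phi^\natural$ approximates the bi-$K$-invariant probability measure on $Kg_0K$, whose spherical transform is $\lambda\mapsto\varphi_\lambda(g_0)$, so $\|T_{\phi^\natural}\|_{2\to2}\to\sup_{\lambda\in\liea}|\varphi_\lambda(g_0)|\le\varphi_0(g_0)<1$. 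Herz's principle (cf.\ \cite{Her,Cow-h}) actually asserts that for a nonnegative \emph{already} $K$-biinvariant kernel the $L^p$-convolutor norm on $L^p(G)$ is attained on $L^p(G//K)$; it does not permit replacing a general $\phi\ge0$ by $\phi^\natural$.

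Once the reduction fails, the remainder of your argument (spherical diagonalization, the bound $\sup_\lambda|\widehat\phi(\lambda)|\le\int_G\phi\,\varphi_0\le\|\phi\|_p\|\varphi_0\|_{p'}$, and $\varphi_0\in L^{p'}$ via \eqref{spest} and \eqref{ch1}--\eqref{ch2}) proves the inequality only for $K$-biinvariant $\phi$. The passage to general $\phi$ is exactly the nontrivial content of Kunze--Stein, and the known proofs supply it through additional structure---Cowling's analytic family of uniformly bounded representations together with Stein interpolation, as you correctly note at the end. Your duality derivation of \eqref{cks} from the first inequality is fine.
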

We will identify a  function  on $X= G/K$ with a right $K$-invariant function on $G$. For an integrable function $f$ on $X$, \[\int_Gf(g)\,dg=\int_Xf(x)\,dx,\] where in the left hand side $f$ is considered as a right $K$-invariant function on $G$ and $dg$ is the Haar measure on $G$, while on the right hand side $dx$ is the  $G$-invariant measure on $X$. We shall slur over the difference between integrating over $G$ and that on $X=G/K$ as we shall deal with functions on $X$.
For a function space $\mathcal F(X)$ on $X$, let $\mathcal F(G//K)$ denote the subspace of $\mathcal F(X)$ consisting of  left $K$-invariant functions in $\mathcal F(X)$. Similarly, let $\mathcal F(X)^W$ denote the space of all $W$-invariant functions in $\mathcal F(X)$.
\subsection{Spherical Fourier analysis}
For $\lambda \in \m a_{\C}$, the elementary spherical function $\varphi_\lambda$ is defined by
\begin{equation} \label{esf}
 \varphi_\lambda(x)= \int_{K} e^{\langle i\lambda+\rho, A(k^{-1}x) \rangle}\,dk, \:\:\:\:x \in G.
\end{equation}
The elementary spherical function $\varphi_\lambda$ has the following properties (\cite{Helga-2, Helga-3}).
\begin{itemize}
\item [(a)] $\varphi_\lambda(x)$ is a smooth, $K$-biinvariant in $x$, and Weyl group invariant function in $\la$. Precisely, $\varphi_{\lambda_1}$ and  $\varphi_{\lambda_2}$ are identical if and only if $\lambda_1 = w \cdot\lambda_2$ for some $w \in W$.
\item[(b)] For all $H \in \overline{ \m a_+}$, the spherical functions $\varphi_{-i\gamma_p\rho}$, $p\in [1,2]$, satisfy the following estimate (\cite[p. 161]{GV}, \cite{A}) :
\begin{equation}\label{spest}
\begin{rcases}
    \varphi_{-i\gamma_p\rho}(\exp H)&\lesssim& (\Pi_{\alpha \in \Sigma_r^+}(1+ \langle \alpha, H \rangle)) e^{-\frac{2}{p'}\langle \rho, H \rangle},\:\:\:p<2\\
    &\asymp& (\Pi_{\alpha \in \Sigma_r^+}(1+ \langle \alpha, H \rangle)) e^{-\langle \rho, H \rangle},\:\:\:p=2.
\end{rcases}
\end{equation}
In particular, $\varphi_{-i\gamma_p\rho}\in L^q(G//K)$ for all $q\in (p',\infty]$.
\item [(c)] For each fixed $\lambda \in \m a_\C$,
\begin{equation*}
\Delta \varphi_\lambda = -(|\lambda|^2 + | \rho|^2)\varphi_\lambda,\:\:\:\:\varphi_\lambda(e)=1,
\end{equation*}
with $\varphi_\lambda(x)= \varphi_{-\lambda}(x^{-1})$, for all $x\in G$.
\item [(d)] For each $x \in G$, $\varphi_\lambda(x)$ is holomorphic in $\lambda \in \m a_{\C}.$
\end{itemize}
For $\lambda \in \m a_\C$, and a suitable function $f$ on $X$, the spherical Fourier transform of $f$ at $\lambda$ is defined as
\begin{equation}
\label{db1}
\what f(\lambda)= \int_G f(x)\varphi_{-\lambda}(x)\, dx.
\end{equation}
For $\lambda \in \m a_\C$, $kM \in K/M$ and a suitable function $f$ on $X$, the Fourier transform of $f$ at $(\lambda, kM)$ is given by
\begin{equation}
\label{db1h}
\wtilde f(\lambda, kM)= \int_X f(x) e^{\langle -i\lambda+\rho, A(k^{-1}x) \rangle} \, dx.
\end{equation}
It is known that if $f$ is left $K$-invariant function on $X$ then
\begin{equation*}
\wtilde f(\lambda, kM)=\what f(\lambda),\:\:\:\:\:kM\in K/M.
\end{equation*}
We shall need the following inequality regarding the Fourier transform $\tilde{f}$ (see \cite[p.209]{Helga-2}), which easily follows from the fact that $\|\varphi_{\la}\|_{\infty}\leq 1$, if and only if $|\Im\la|\leq|\rho|$
\begin{equation}
\int_{K/M}|\tilde{f}(\la,kM)|dk_M\leq \|f\|_1,\:\:\:\text{for all $f\in L^1(X)$, $|\Im\la|\leq |\rho|$.}\label{helgs2}
\end{equation}
For two suitable functions $f_1$ and $f_2$ on $X$, their convolution $f_1\ast f_2$ is defined as
\begin{equation*}
f_1\ast f_2(gK)=\int_Gf_1(g_1)f_2(g_1^{-1}g)\:dg_1,\:\:\:\:\:g\in G.
\end{equation*}
Moreover, if $f_2$ is left $K$-invariant then
\begin{equation*}
	\wtilde{f_1\ast f_2}(\la,kM)=\wtilde{f_1}(\la,kM)\what{f_2}(\la).
\end{equation*}
We shall also need the following formula regarding the Fourier transform of translates of functions (see \cite[p.200]{Helga-2} ): for $f\in L^1(G//K)$ and $y\in G\setminus K$ if we consider the left translate $l_yf(x)=f(y^{-1}x)$, $x\in G$, then
\begin{equation*}
\wtilde{l_yf}(\la,kM)=e^{\langle -i\la+\rho,A(k^{-1}y)\rangle}\what f (\la),\:\:\:\:\:k\in K,\:\: |\Im\la|\leq |\rho|.\label{helgs1}
\end{equation*}
It is well known that for $f\in L^1(G//K)$ and $|\Im\la|\leq |\rho|$
\begin{equation}\label{spproj}
	f\ast\varphi_{\lambda}(x)=\what f(\la)\varphi_{\lambda}(x),\:\:\:\:\:x\in X.
\end{equation}
From the estimate \eqref{spest} it can be easily seen that for $f\in L^p(G//K)$, $p\in [1,2)$ the integral \eqref{db1} is convergent. Moreover, if $f\in L^p(G//K)\cap C(G//K)$, $p\in [1,2)$ and $\what {f}\in L^1(\m a, |\hc(\lambda)|^{-2}\, d\lambda)$ then the following pointwise Fourier inversion holds (\cite[Theorem 3.3]{ST}):
\begin{equation}
\label{inv-1}
f(x)= C_o \int_{\m a} \what f(\lambda) \varphi_\lambda(x)|\hc(\lambda)|^{-2}\, d\lambda, \:\:\:\:\:\text{for all $x \in G$},
\end{equation}
where $C_o= 2^{n-\ell}/(2\pi)^\ell|K/M||W|$ and $\hc(\lambda)$ is Harish--Chandra's $\hc$-function (see \eqref{cfn-1} below). It is also known  that the spherical Fourier transform $\what f$ defined initially on $C_c^{\infty}(G//K)$ by the formula \eqref{db1}, extends to an
isometry of $L^2(G//K)$ onto $L^2(\m a, |\hc(\lambda)|^{-2}\, d\lambda)^W$ such that the following Plancheral formula holds:
\begin{equation}
\label{pla-1}
\int_X |f(x)|^2 \, dx= \int_{\m a} |\what {f}(\lambda)|^2  |\hc(\lambda)|^{-2}\, d\lambda.
\end{equation}
We shall need the following version of Hausdorff--Young inequality for the case of rank one.
\begin{thm}\cite[Theorem 2.1]{Cow1}
For $p\in(1,2)$, there is a positive constant $C_p$ such that

\begin{equation}\label{hyineq}
\int_{0}^{\infty}|\what F(\la-i\gamma_p\rho)|^{p'}\tilde{\mu}(\la)\:d\la \leq C_p\|F\|^{p'}_p,
\end{equation}
for all $F\in L^p(G//K)$, where $\tilde{\mu}(\la)=(1+|\la|)^{m_{\alpha}+m_{2\alpha}},\:\:\alpha\in\Sigma_r^+$.
\end{thm}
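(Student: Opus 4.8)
The plan is to prove this by Stein's theorem on interpolation of an analytic family of operators, exploiting the explicit rank-one structure: here $\m a\cong\R$, $\Sigma_r^+=\{\alpha\}$ (with $2\alpha$ possibly also a root), and Harish--Chandra's $\hc$-function is a completely explicit ratio of Gamma functions. The contour shift $\la\mapsto\la-i\gamma_p\rho$ appearing in the statement is the crucial device. First I would fix $F\in C_c^\infty(G//K)$, for which the Paley--Wiener theorem guarantees that $\la\mapsto\what F(\la)$ extends to an entire function, rapidly decreasing on every horizontal line $\Im\la=\text{const}$; the general case follows by density once the inequality is proved with a constant independent of $F$.

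The key structural observation is that $\tilde{\mu}$ is, up to constants, the Plancherel density transported to the line $\Im\la=-\gamma_p\rho$. Writing the rank-one $\hc$-function explicitly, one checks that $|\hc(\la)|^{-2}\asymp|\la|^2(1+|\la|)^{m_\alpha+m_{2\alpha}-2}$, while
\[
|\hc(\la-i\gamma_p\rho)|^{-2}\asymp (1+|\la|)^{m_\alpha+m_{2\alpha}}=\tilde{\mu}(\la),\qquad \la\in\R,
\]
uniformly in $\la$. Indeed $\hc(\cdot)^{-1}$ has a zero at the origin, so the unshifted density vanishes like $\la^2$ there, whereas the shift by $\gamma_p\rho$, with $\gamma_p\in(0,1)$ keeping the contour strictly inside the tube $|\Im\la|<|\rho|$, moves off this zero. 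This is exactly why $\tilde{\mu}$, and not $|\hc(\la)|^{-2}$, is the correct weight, and it reduces the problem to proving $\|\what F(\cdot-i\gamma_p\rho)\|_{L^{p'}(|\hc(\cdot-i\gamma_p\rho)|^{-2}\,d\la)}\lesssim\|F\|_p$.

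For the interpolation I would consider, for $z$ in the strip $0\le\Re z\le1$, an analytic family of the form $T_zF(\la)=e^{z^2}\,\hc(\la-iz\rho)^{-(1-z)}\,\what F(\la-iz\rho)$, the fractional power being chosen precisely so that at the real point $z=\gamma_p$ (where $\tfrac1p=\tfrac{1-\gamma_p}{2}+\gamma_p$ and the target exponent is $p'$) the weight $|\hc(\la-i\gamma_p\rho)|^{-(1-\gamma_p)p'}=|\hc(\la-i\gamma_p\rho)|^{-2}$ emerges, and the Gaussian $e^{z^2}$ only serves to secure the admissible growth required by Stein's theorem. On the line $\Re z=1$ the family reduces to $\what F(\la-i\rho)$ up to the $\hc$-factor, and the genuinely clean input is that $\varphi_{i\rho}\equiv1$ (the trivial representation, $i\cdot i\rho+\rho=0$), whence $|\varphi_{-\la+i\rho}|\le\varphi_{i\rho}=1$ and therefore $|\what F(\la-i\rho)|\le\|F\|_1$; this yields the $L^1$ endpoint. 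On the line $\Re z=0$ one uses translation invariance of Lebesgue measure together with the Plancherel formula \eqref{pla-1}. Interpolation then returns the reduced inequality of the previous paragraph.

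The hard part will be constructing a \emph{single} holomorphic family controlled on \emph{both} boundary lines, because the two requirements are in genuine tension. If one twists by the full power $\hc^{-1}$, the line $\Re z=0$ gives an exact isometry $\|T_{is}F\|_{L^2}=\|F\|_2$ by Plancherel, but the produced weight at $z=\gamma_p$ overshoots to $\tilde{\mu}^{p'/2}$ and the $L^\infty$ endpoint fails since $|\hc(\cdot-i\rho)|^{-1}$ grows; if instead one uses the fractional power $\hc^{-(1-z)}$ needed to hit $\tilde{\mu}$ exactly, then on $\Re z=0$ the modulus of the complex power equals $|\hc(\mu)|^{-1}e^{-s\,\arg\hc(\mu)}$, and since $\arg\hc$ grows along horizontal lines (like $\arg\Gamma$), the factor $e^{-s\,\arg\hc(\mu)}$ threatens the $L^2$ bound. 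Showing that this phase-induced growth is absorbed by $e^{z^2}$ and remains of admissible type $e^{a|\Im z|}$ uniformly in $\la$ is the delicate estimate, and is precisely where the explicit Gamma-function form of the rank-one $\hc$-function and the classical asymptotics of $\arg\Gamma$ are indispensable; a secondary technicality is the rigorous justification of the contour shift in the Plancherel identity, which is where the Paley--Wiener decay of $\what F$ enters. I note that one can alternatively attempt to derive the inequality from the Kunze--Stein phenomenon (Theorem \ref{ksphn}, inequality \eqref{cks}) by dualizing against a test function via the inversion formula \eqref{inv-1}, though this route reintroduces an interpolation of the same nature.
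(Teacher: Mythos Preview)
The paper does not prove this theorem: it is quoted verbatim from Cowling--Giulini--Meda \cite[Theorem~2.1]{Cow1} and used as a black box in Section~\ref{opc}. There is therefore no ``paper's own proof'' to compare against; what you have written is a sketch of the argument in the cited source.

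Your outline is the right one and matches the strategy of \cite{Cow1}: Stein interpolation of an analytic family between the Plancherel isometry on $\Re z=0$ and the trivial bound $|\what F(\la-i\rho)|\le\|F\|_1$ (coming from $|\varphi_{-\la+i\rho}|\le\varphi_{i\rho}\equiv 1$) on $\Re z=1$, with the contour shift $\la\mapsto\la-iz\rho$ producing the weight $\tilde\mu$ at the intermediate point $z=\gamma_p$. Your identification $|\hc(\la-i\gamma_p\rho)|^{-2}\asymp(1+|\la|)^{m_\alpha+m_{2\alpha}}=\tilde\mu(\la)$ is correct and is exactly why $\tilde\mu$ appears.

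That said, what you have submitted is a plan, not a proof: you yourself flag the central difficulty---controlling the modulus of the complex power $\hc(\la-iz\rho)^{-(1-z)}$ on the line $\Re z=0$, where $\arg\hc$ grows logarithmically---and do not resolve it. This is genuinely the crux. In the actual argument of \cite{Cow1} the analytic family is not built from a bare complex power of $\hc$; rather one uses an explicit multiplier of the form $\prod_j\Gamma(a_j+b_jz+c_j\la)^{\pm 1}$ tailored so that (i) the Gamma factors combine on $\Re z=0$ to reproduce $|\hc|^{-1}$ exactly, (ii) on $\Re z=1$ the resulting function of $\la$ is bounded, and (iii) Stirling's formula gives admissible growth in $\Im z$ directly, with no need to estimate $\arg$ of an implicitly defined quantity. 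Your factor $e^{z^2}$ alone does not suffice, because the problematic growth is in $|\Im z|\cdot\log(1+|\la|)$, which is not uniform in $\la$. A second technical point: on $\Re z=0$ the shifted argument $\la-iz\rho$ is real, so $\hc^{-1}$ has a zero there and defining a nonintegral complex power requires care about branch cuts; the explicit Gamma-product construction sidesteps this as well. If you want to complete the argument along your lines, you should replace $\hc^{-(1-z)}$ by such an explicit product and verify the three properties above using Stirling.
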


For $\alpha \in \Sigma_r^+$, $z \in \C$ with $\Im z<0$, let
\begin{equation*}
\hc_\alpha(z)=v(\alpha)\frac{\Gamma(iz)}{\Gamma\left(iz+\frac 12 m_\alpha\right)}\frac{\Gamma(\frac i 2 z+\frac{1}{4}m_\alpha)}{\Gamma\left (\frac i2 z+\frac 14 m_\alpha+ \frac 12 m_{2\alpha}\right)},
\end{equation*}
where
\begin{equation*}
v(\alpha)=\frac{\Gamma\left(\frac{\langle \alpha, \rho \rangle}{\langle \alpha, \alpha \rangle}+\frac 12 m_\alpha\right)}{\Gamma\left(\frac{\langle \alpha, \rho \rangle}{\langle \alpha, \alpha \rangle}\right)}
	\frac{\Gamma\left(\frac 12\frac{\langle \alpha, \rho \rangle}{\langle \alpha, \alpha \rangle}+\frac 14 m_\alpha+\frac 12 m_{2\alpha}\right)}{\Gamma\left(\frac 12\frac{\langle \alpha, \rho \rangle}{\langle \alpha, \alpha \rangle}+\frac 14 m_\alpha\right)}.
\end{equation*}
The following formula for $\hc$-function is due to Gindikin--Karpelevi\v{c} (\cite[Theorem 6.13]{Helga-3}:
\begin{equation}
\label{cfn-1}
\hc(\lambda)= \prod_{\alpha \in \Sigma_r^+}\hc_\alpha\left( \frac{\langle \alpha, \lambda \rangle}{\langle \alpha, \alpha \rangle}\right).
\end{equation}
It is known that
\begin{equation}
|\hc(\lambda)|^{-2}= \hc(w\cdot\lambda)^{-1}\hc(-w\cdot\lambda)^{-1},\:\:\:\:\lambda \in \m a,\:\: w \in W.
\end{equation}
Following \cite{AE} we define
\begin{equation}
\label{ltsq1}
\hb(\lambda)= \pi(i\lambda)\hc(\lambda),
\end{equation}
where
\begin{equation}
\label{cgeq1}
\,\,\,\pi(\lambda)= \displaystyle \prod_{\alpha \in \Sigma_r^+} \langle \alpha, \lambda \rangle.
\end{equation}
From \eqref{cfn-1} it can be seen that the $\hc$-function is holomorphic in $\{\lambda\in \mathfrak a_{\C}\mid -\Im \lambda\in\mathfrak a_+\}$ and hence the function $\lambda \mapsto \hb(-\lambda)^{\pm 1}$ is holomorphic in a neighborhood of $\m a +i \overline{\m a^+}$ which satisfies the following estimate:
 \begin{equation}
 \label{hb1}
 |\hb(-\lambda)|^{\pm 1 } \asymp  \displaystyle \prod_{\alpha \in \Sigma_r^+} (1+|\langle \alpha, \lambda \rangle|)^{\mp\frac{m_\alpha+m_{2\alpha}}2 \pm 1},\:\:\:\:\:\:\lambda \in \m a +i \overline{\m a_+},
 \end{equation}
and whose derivatives satisfy the estimates:
\begin{equation}
\label{hb2}
|p(\nabla_\lambda)\hb(-\lambda)|^{\pm 1 }=O(|\hb(-\lambda)|^{\pm 1 } ),\:\:\:\:\:\:\lambda \in \m a +i \overline{\m a_+},
\end{equation}
where
$\nabla_\lambda=(\frac {\partial}{\partial \lambda_1}, \frac {\partial}{\partial \lambda_2}, \cdots, \frac {\partial}{\partial \lambda_{\ell}})$ and $p(\nabla_\lambda)$ is any differential polynomial (for details see \cite[p.7-8]{AE}).
From \eqref{ltsq1} and \eqref{hb1}, it is clear that  the $\hc$-function has the following global behaviour:
\begin{equation}
\label{cfest}
|\hc(\lambda)|\asymp \prod_{\alpha \in \Sigma_r^+}| \langle \alpha, \lambda \rangle|^{-1}(1+| \langle \alpha, \lambda \rangle|)^{1-\frac{m_\alpha+m_{2\alpha}}2}, \,\,\,\,\,\,\Im \lambda \in -\overline{\m a_{+}}.
\end{equation}

We shall also need Harish--Chandra's expansion of spherical functions. A vector $\lambda \in \m a$ is said to be regular if $\langle \lambda , \alpha  \rangle $ is nonzero for every $\alpha \in \Sigma$. For all regular $\lambda \in \mathfrak{a}$, the following convergent Harish--Chandra expansion is known to hold
\begin{equation}
\label{es1}
\varphi_\lambda(\exp H)= \sum_{w \in W} \hc(w. \lambda) \Phi_{w.\lambda}(H),
\end{equation}
for all $H \in \liea_+$, where
\begin{equation}
\label{es2}
\Phi_\lambda(H)=e^{\langle i\lambda-\rho, H\rangle} \displaystyle \sum_{q \in 2Q}\Gamma_q(\lambda)e^{-\langle q, H\rangle},\:\:\:\:\:Q= \sum_{\alpha \in {\Sigma_s^+}} \N \alpha,
\end{equation}
with $\Gamma_0(\lambda)\equiv 1$. The other coefficients $\Gamma_q(\lambda)$ are rational functions in $\lambda \in \liea_{\C}$, which
have no poles in $\liea + i\overline{\liea_+}$ and satisfy the estimate
\begin{equation}
\label{cfs1}
|\Gamma_q(\lambda)| \le C (1+|q|)^d,\:\:\:\:\:\:\lambda \in \liea + i\overline{\liea_+}
\end{equation}
for some non-negative constants $C$ and $d$ independent of $q$ and $\lambda$. Moreover, for any differential polynomial $p(\nabla_\lambda)$, we have the estimate
\begin{equation}
\label{cfs2}
|p(\nabla_\lambda)(\Gamma_q(\lambda))|\lesssim |\Gamma_q(\lambda)|,\:\:\:\:\:\:\lambda \in \liea + i\overline{\liea_+}.
\end{equation}
\subsection{Fractional heat kernel}
For $\alpha \in (0, 1)$, there exists a function $\eta_t^\alpha:\R^+\rightarrow\R^+$ (known as the  Bochner's subordinator function) such that the following  subordination formula holds \cite[p. 88]{Grac}:
\begin{equation}
\label{fhss2}
e^{-ts^{\alpha}}= \int_{0}^\infty  e^{-su} \eta_t^{\alpha}(u)\, du,\,\,\,\, \,s \ge 0.
\end{equation}
The subordinator function $\eta^\alpha_t$ satisfies the following global estimate \cite[(8)--(9)]{Grac}:
\begin{equation}
\label{fehe-2}
   \eta_t^\alpha(u) \asymp \left\{\begin{array}{lr}
        t^{\frac{1}{2(1-\alpha)}}u^{-\frac{(2-\alpha)}{2(1-\alpha)}}e^{-c_\alpha t^{\frac{1}{1-\alpha}}u^{-\frac{\alpha}{1-\alpha}} }, & u \le t ^{ \frac 1 \alpha}, \\
        t u^{-(1+\alpha)}e^{-c_\alpha t^{\frac{1}{1-\alpha}}u^{-\frac{\alpha}{1-\alpha}} }, & u > t ^{ \frac 1 \alpha},
 \end{array}\right.
\end{equation}
where $c_\alpha=(1-\alpha) \alpha^{\frac \alpha{1-\alpha}}.$ From \eqref{fehe-2} it follows that
there exist real numbers $a_1$ and $a_2$ such that for all $t\in (1,\infty)$, $u\in (0,\infty)$
\begin{equation}
\label{fehe-3}
   \eta_t^\alpha(u) \lesssim
        t^{a_1}(1+ u^{a_2})e^{-c_\alpha t^{\frac{1}{1-\alpha}}u^{-\frac{\alpha}{1-\alpha}} }.
 \end{equation}

For $\alpha\in(0,1)$, the fractional heat kernel $h_t^\alpha$ is defined as
\begin{equation}
\label{fhss1}
h_t^\alpha(x)= \int_{0}^\infty  h_u(x)   \eta_t^{\alpha}(u)\, du,\,\,\,\,\,t>0, x\in X.
\end{equation}

It is known that $h_t^\alpha$ is a positive $K$-biinvariant $L^2$-function on $X$ whose spherical Fourier transform is given by
\begin{equation}
h_t^\alpha(\lambda)=e^{-t(|\lambda|^2+|\rho|^2)^\alpha}, \,\,\,\,\,\lambda \in \m a.
\end{equation}
We observe that $h_t^{\frac 12}$ is the Poisson kernel of $X$.
Let  $u$ be an $L^2$-tempered distribution on $X$.  We define the action of $(-\Delta)^\alpha$ on $u$ by
$$\widehat{(-\Delta)^{\alpha} u }= -\left(|\cdot|^2+|\rho|^2\right)^{\alpha} \what u.$$
It is known that  $h^\alpha_t$ is the fundamental solution of the fractional heat equation on $X$:
\[\partial_t u+(-\Delta)^\alpha u=0, \,\, \, t>0.\]
It is also known that for every $t>0$ and $s>0$
\begin{equation*}
	\int_X h^\alpha_t(x)\, dx=1,\hspace{1cm}
	h_t^\alpha\ast h_{s}^\alpha= h_{t+s}^\alpha.
\end{equation*}
Norm estimates similar to \eqref{ehe-1} are also available for $h_t^{\alpha}$. For all $t\in (1,\infty)$ and $\alpha \in (0,1)$, we have the following norm estimates of $h_t^{\alpha}$.
\begin{equation}
\label{fehe-1}
   \|h_t^\alpha\|_p \asymp \left\{\begin{array}{lr}
        t^{-\frac{\ell}{2p'}}e^{-t \left(\frac{4|\rho|^2}{pp'}\right)^\alpha }, & 1 \le p<2,\\
        t^{-\frac{\nu}{4}}e^{-{|\rho|^{2\alpha} t}}, &  p=2,\\
        t^{-\frac{\nu}{2}}e^{-{|\rho|^{2\alpha} t}}, &  2< p\le \infty.
 \end{array}\right.
\end{equation}
The estimates for $p \in [1,2]$ and $p=\infty$ are provided in \cite[Lemma 3 (i)--(iii)]{Cow2}. For the remaining range $p \in (2,\infty)$, it is enough to establish that
\begin{equation}
\label{ep10}
{\|h_t^{\alpha}\|_p}\asymp \|h_{t/2}^\alpha\|_2^2,\:\:\:\:\text{for all $t>1$}.
\end{equation}
Applying the H$\ddot{\text{o}}$lder's inequality, we obtain
$$\|h_{t+1}^\alpha\|_{\infty}=\|h_1^{\alpha}\ast h_t^{\alpha}\|_{\infty}\leq\|h_1^{\alpha}\|_{p^{\prime}}\|h_t^{\alpha}\|_{p}\lesssim \|h_t^{\alpha}\|_{p}.$$
Using the estimate of $\|h_t^{\alpha}\|_p$ for $p=\infty$, and $p=2$, we get
$$\|h_t^{\alpha}\|_{p}\gtrsim (t+1)^{-\frac{\nu}{2}}e^{-(t+1)|\rho|^{\alpha}}\gtrsim t^{-\frac{\nu}{2}}e^{-t|\rho|^{\alpha}}\gtrsim\|h_{t/2}^\alpha\|_2^2.$$
The reverse inequality follows straight away from \eqref{cks}
$$\|h_t^{\alpha}\|_{p}=\|h_{t/2}^{\alpha}\ast h_{t/2}^{\alpha}\|_{p}\lesssim\|h_{t/2}^{\alpha}\|_2^2.$$
\section{Results for the heat  kernel}
\label{hk561}
We start with the case $p=2$. As has been mentioned in the introduction, the following result and \eqref{vav-1} serves as the motivation for formulation of the statements of our main results.
\begin{proposition}
\label{p=2}
If $f\in L^1(G//K)$ then
\begin{equation*}
\lim_{t \to \infty}\|h_t\|_2^{-1}\|f \ast h_t-\what f(0)h_t\|_2=0.
\end{equation*}
Moreover, for any $z \in \C$,
\begin{equation*}
	\lim_{t \to \infty}\|h_t\|_2^{-1}\|f \ast h_t- z\,  h_t\|_2=0,\:\:\:\text{ if and only if } \:\:z= \what f(0).
\end{equation*}
\end{proposition}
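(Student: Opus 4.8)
The plan is to pass to the spherical Fourier transform side and invoke the Plancherel formula \eqref{pla-1}. Since $f$ is $K$-biinvariant and $h_t$ is $K$-biinvariant with $\what{h_t}(\lambda)=e^{-t(|\lambda|^2+|\rho|^2)}$, the convolution theorem gives $\what{f\ast h_t}(\lambda)=\what f(\lambda)\,e^{-t(|\lambda|^2+|\rho|^2)}$; moreover $f\ast h_t\in L^2$ by Young's inequality ($\|f\ast h_t\|_2\le\|f\|_1\|h_t\|_2$), so Plancherel applies to $f\ast h_t-\what f(0)h_t$, whose spherical transform is $(\what f(\lambda)-\what f(0))\,e^{-t(|\lambda|^2+|\rho|^2)}$. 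Consequently
\[
\frac{\|f\ast h_t-\what f(0)h_t\|_2^2}{\|h_t\|_2^2}=\frac{\int_{\m a}|\what f(\lambda)-\what f(0)|^2\,e^{-2t|\lambda|^2}|\hc(\lambda)|^{-2}\,d\lambda}{\int_{\m a}e^{-2t|\lambda|^2}|\hc(\lambda)|^{-2}\,d\lambda},
\]
the common factor $e^{-2t|\rho|^2}$ having cancelled. It therefore suffices to show this ratio tends to $0$ as $t\to\infty$.

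The guiding idea is that the Gaussian weight $e^{-2t|\lambda|^2}$ concentrates all the mass of the Plancherel measure at $\lambda=0$, while the numerator is weighted by $g(\lambda):=|\what f(\lambda)-\what f(0)|^2$, which vanishes at the origin. Since $f\in L^1$ and $\|\varphi_{-\lambda}\|_\infty\le1$ for $\lambda\in\m a$, dominated convergence shows $\what f$ is continuous on $\m a$; hence $g$ is continuous, bounded by $4\|f\|_1^2$, and $g(0)=0$. Given $\eps>0$, choose $\delta>0$ with $g(\lambda)<\eps$ for $|\lambda|<\delta$. Splitting the numerator at $|\lambda|=\delta$, the inner piece is at most $\eps$ times the denominator, contributing at most $\eps$ to the ratio.

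For the outer piece the main point is that the tail $\int_{|\lambda|\ge\delta}e^{-2t|\lambda|^2}|\hc(\lambda)|^{-2}\,d\lambda$ is exponentially small whereas the denominator decays only polynomially. Indeed, for $t\ge1$ and $|\lambda|\ge\delta$ one has $e^{-2t|\lambda|^2}\le e^{-2(t-1)\delta^2}e^{-2|\lambda|^2}$, so the tail is at most $C_\delta\,e^{-2(t-1)\delta^2}$ with $C_\delta=\int_{\m a}e^{-2|\lambda|^2}|\hc(\lambda)|^{-2}\,d\lambda=e^{2|\rho|^2}\|h_1\|_2^2<\infty$. On the other hand \eqref{ehe-1} gives $\|h_t\|_2^2\asymp t^{-\nu/2}e^{-2|\rho|^2t}$, so the denominator is $\asymp t^{-\nu/2}$. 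Hence the outer contribution to the ratio is $\lesssim t^{\nu/2}e^{-2(t-1)\delta^2}\to0$, and as $\eps$ was arbitrary the ratio tends to $0$. This proves the first assertion with $z=\what f(0)$.

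The equivalence is then the uniqueness principle recorded in the remark following Theorem~\ref{fcl16}, applied with $g_t=h_t$, $f_t=f\ast h_t$, $\zeta=\what f(0)$ and $p=2$: if the limit vanishes for some $z\in\C$, the triangle inequality gives
\[
\|h_t\|_2^{-1}\|f\ast h_t-z\,h_t\|_2\ge|z-\what f(0)|-\|h_t\|_2^{-1}\|f\ast h_t-\what f(0)h_t\|_2,
\]
and letting $t\to\infty$ (the last term going to $0$ by the first part) forces $|z-\what f(0)|=0$. I expect the one genuinely quantitative step to be the tail-versus-denominator comparison, which is precisely where the polynomial lower bound $\|h_t\|_2\gtrsim t^{-\nu/4}e^{-|\rho|^2t}$ from \eqref{ehe-1} is used; the remainder is a routine concentration argument.
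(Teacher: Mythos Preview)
Your proof is correct. Both you and the paper reduce the problem via Plancherel to the ratio
\[
\frac{\int_{\m a}|\what f(\lambda)-z|^2 e^{-2t|\lambda|^2}|\hc(\lambda)|^{-2}\,d\lambda}{\int_{\m a} e^{-2t|\lambda|^2}|\hc(\lambda)|^{-2}\,d\lambda},
\]
but then diverge tactically. The paper substitutes the explicit estimate \eqref{cfest} for $|\hc(\lambda)|^{-2}$, rescales $\lambda\mapsto\lambda/\sqrt t$, and applies dominated convergence to compute the limit for \emph{general} $z$ as a positive multiple of $|\what f(0)-z|^2$; this yields both assertions in one stroke. You instead set $z=\what f(0)$, run an $\eps$--$\delta$ concentration argument (near/far split at $|\lambda|=\delta$), and then deduce uniqueness afterwards via the triangle-inequality observation from the remark following Theorem~\ref{fcl16}. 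Your route is slightly more elementary in that it never unpacks the product formula for $|\hc(\lambda)|^{-2}$, using only the integrability of $e^{-2|\lambda|^2}|\hc(\lambda)|^{-2}$ and the lower bound on $\|h_t\|_2$ from \eqref{ehe-1}; the paper's route is a bit more efficient and gives the exact limiting value for every $z$ rather than just ``zero versus nonzero''.
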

\begin{proof}
Replacing $\|h_t\|_2$ by the estimate \eqref{ehe-1} and using the Plancherel formula \eqref{pla-1} we obtain
\begin{eqnarray}
&&\|h_t\|_2^{-2}\|f \ast h_t-z\, h_t\|_2^2 \no \\
&\asymp& t^{\frac \nu 2}  e^{2t|\rho|^2}  \int_{\m a} |\what {f}(\lambda)-z|^2 e^{-2t(|\lambda|^2+|\rho|^2)}  |\hc(\lambda)|^{-2}\, d\lambda \no \\
&\asymp& t^{\frac \nu 2} \int_{\m a} |\what {f}(\lambda)-z|^2 e^{-2t|\lambda|^2} \left[ \prod_{\alpha \in \Sigma_r^+}| \langle \alpha, \lambda \rangle|^{2}(1+| \langle \alpha, \lambda \rangle|)^{{m_\alpha+m_{2\alpha}}-2}\right] \, d\lambda \no \\
&\asymp & \int_{\m a} \left|\what {f}\left(\frac{\lambda}{\sqrt t}\right)-z\right|^2 e^{-2|\lambda|^2} \left[ \prod_{\alpha \in \Sigma_r^+}| \langle \alpha, \lambda \rangle|^{2}\left(1+\frac{| \langle \alpha, \lambda \rangle|}{\sqrt t}\right)^{({m_\alpha+m_{2\alpha}}-2)}\right]d\lambda,
\label{flhm2}
\end{eqnarray}
where we used the estimate \eqref{cfest} of $|\hc(\lambda)|$ in the third line. Since $f \in L^1(G//K)$, we observe that
\begin{eqnarray}
&\,&\left|\what {f}\left(\frac{\lambda}{\sqrt t}\right)- z \right|^2 \left[ \prod_{\alpha \in \Sigma_r^+}| \langle \alpha, \lambda \rangle|^{2}\left(1+\frac{| \langle \alpha, \lambda \rangle|}{\sqrt t}\right)^{({m_\alpha+m_{2\alpha}}-2)}\right] \no \\
 &\lesssim &\prod_{\alpha \in \Sigma_r^+}| \langle \alpha, \lambda \rangle|^{2}\left(1+ {| \langle \alpha, \lambda \rangle|}\right)^{({m_\alpha+m_{2\alpha}}-2)_+},\:\:\:\:\:\text{for all $t >1$.}
 \label{flhm1}
 \end{eqnarray}
 From \eqref{flhm1} and the dominated convergence theorem it follows from \eqref{flhm2} that
 \begin{eqnarray}
 \lim_{t \to \infty}\|h_t\|_2^{-2}\|f \ast h_t-z\, h_t\|_2^2
 \asymp  \int_{\m a} \left|\what {f}(0)-z\right|^2 e^{-2|\lambda|^2} \left( \prod_{\alpha \in \Sigma_r^+}| \langle \alpha, \lambda \rangle|^{2}\right)\, d\lambda.
 \label{flmh3}
 \end{eqnarray}
From \eqref{flmh3} it is clear that
\begin{equation*}
\lim_{t \to \infty}\|h_t\|_2^{-1}\|f \ast h_t- z\,  h_t\|_2=0,
\end{equation*}
if and only if $z= \what f(0)$.
\end{proof}
We now proceed towards the case $p\in [1,2)$. The strategy is to first prove an $L^p$-analogue of \cite[Lemma 2.1]{AE} using \cite[Theorem 4.1.2]{AL}. In this regard, we consider a function  $r:\R^+\rightarrow \R^+$ satisfying the conditions
\begin{equation}\label{r}
\lim_{t \to \infty}\frac{r(t)}{\sqrt t} = \infty,\:\:\:\:\:\lim_{t \to \infty}\frac{r(t)}{ t}=0.
\end{equation}
Let $\Omega_t^p:= B(2t\gamma_p \rho, r(t))$ be the ball in $\m a$ with  radius $r(t)$ and  center $2t \gamma_p \rho$. The following proposition, which will be used later for proving Theorem \ref{cl16}(a), says that for $p\in [1,2)$  the $L^p$-norm of $h_t$ concentrates asymptotically in the set $K(\exp \Omega_t^p)K$. This set will be referred as the critical region for $L^p$-heat concentration for $p\in [1,2)$.
\begin{proposition}
\label{cl5}
If $p\in (1,2)$ then for any $N \ge 0$ and sufficiently large $t$ the following estimate holds:
\begin{equation*}
\|h_t\|_p^{-1} \|h_t \|_{L^p(G\setminus K (\exp \Omega_t^p) K)} \lesssim \left(\frac{r(t)}{\sqrt t}\right)^{-N}.
\end{equation*}
In particular,
\begin{equation*}
\lim_{t \to \infty}\|h_t\|_p^{-1} \|h_t \|_{L^p(G\setminus K (\exp \Omega_t^p) K)}=0.
\end{equation*}
\end{proposition}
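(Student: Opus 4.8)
The plan is to transfer everything to the positive chamber via the Cartan decomposition and then perform an explicit Gaussian analysis. Since $h_t$ is $K$-biinvariant, the integration formula \eqref{ch1}--\eqref{ch2} gives
\begin{equation*}
\|h_t\|_{L^p(G\setminus K(\exp\Omega_t^p)K)}^p \asymp \int_{\overline{\m a_+}\setminus\Omega_t^p} h_t(\exp H)^p\, J(H)\,dH,
\end{equation*}
with the full norm $\|h_t\|_p^p$ given by the same integral over all of $\overline{\m a_+}$; here $2t\gamma_p\rho\in\m a_+$ because $\gamma_p>0$ for $p\in(1,2)$ and $\rho$ is regular. I would substitute the global estimate \eqref{hes-1} for $h_t(\exp H)$ and \eqref{ch2} for $J(H)$, and collect the exponential factors. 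The decisive step is to complete the square: the term $-p\langle\rho,H\rangle$ coming from $h_t^p$ combines with the $+2\langle\rho,H\rangle$ from $J$ and with the Gaussian $-\frac{p}{4t}|H|^2$ to give, using $\gamma_p=(2-p)/p$,
\begin{equation*}
-p|\rho|^2 t+(2-p)\langle\rho,H\rangle-\frac{p}{4t}|H|^2 = -\frac{4(p-1)}{p}|\rho|^2 t-\frac{p}{4t}\left|H-2t\gamma_p\rho\right|^2.
\end{equation*}
This is the heart of the matter: the Gaussian is centred precisely at $2t\gamma_p\rho$, the centre of $\Omega_t^p$, and the residual constant $e^{-\frac{4(p-1)}{p}|\rho|^2 t}$ agrees, after the $p$-th power, with the exponential in \eqref{ehe-1}, since $4(p-1)/p=4/p'$. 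This explains the choice of the critical region.

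Factoring the common prefactor $t^{-np/2}e^{-\frac{4(p-1)}{p}|\rho|^2 t}$ out of both integrals, the quantity $\|h_t\|_p^{-p}\|h_t\|_{L^p(G\setminus K(\exp\Omega_t^p)K)}^p$ reduces to
\begin{equation*}
\frac{\displaystyle\int_{\overline{\m a_+}\setminus\Omega_t^p}P_t(H)\,e^{-\frac{p}{4t}|H-H_0|^2}\,dH}{\displaystyle\int_{\overline{\m a_+}}P_t(H)\,e^{-\frac{p}{4t}|H-H_0|^2}\,dH},\qquad H_0:=2t\gamma_p\rho,
\end{equation*}
where $P_t(H)$ gathers the polynomial factors produced by \eqref{hes-1} and \eqref{ch2}. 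I would then rescale $H=H_0+\sqrt t\,u$, which turns the Gaussian into $e^{-\frac{p}{4}|u|^2}$ (the Jacobian $t^{\ell/2}$ cancelling between numerator and denominator) and transforms $\overline{\m a_+}\setminus\Omega_t^p$ into a region contained in $\{|u|\ge R_t\}$, where $R_t:=r(t)/\sqrt t\to\infty$ by the first condition in \eqref{r}.

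Writing $Q_t(u)$ for $P_t(H_0+\sqrt t\,u)$ divided by its leading power of $t$, the remaining task is to establish two uniform bounds: an upper envelope $Q_t(u)\lesssim(1+|u|)^D$, valid for all large $t$ and all admissible $u$ and proved factor by factor from $\langle\alpha,H\rangle\ge 0$ and $\sqrt t\le t$; and a lower bound $Q_t(u)\gtrsim 1$ on $\{|u|\le1\}$, which holds because $\langle\alpha,H_0\rangle\asymp t$ for every $\alpha\in\Sigma^+$. Granting these, the denominator is bounded below by a positive constant, while the numerator is at most $\int_{|u|>R_t}(1+|u|)^D e^{-\frac{p}{4}|u|^2}\,du\lesssim_N R_t^{-N}$ for every $N\ge0$, by the superpolynomial decay of Gaussian tails. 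Hence the ratio is $\lesssim_N (r(t)/\sqrt t)^{-N}$, and taking $p$-th roots (and renaming $N$) yields the stated estimate, the second assertion being immediate from $R_t\to\infty$.

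I expect the main obstacle to be not the Gaussian tail bound itself but the bookkeeping that keeps numerator and denominator carrying exactly the same power of $t$: the final quotient must be free of any residual factor $t^{a}$, since $r(t)/\sqrt t$ is only known to tend to infinity and may grow as slowly as a logarithm, so that a stray polynomial factor in $t$ could not be absorbed into $(r(t)/\sqrt t)^{-N}$. This matching is guaranteed by the uniform polynomial envelope for $Q_t$ together with the exact cancellation of $t$-powers verified through the consistency with \eqref{ehe-1}; indeed, this computation is precisely the $L^p$-heat concentration recorded in \cite[Theorem 4.1.2]{AL}.
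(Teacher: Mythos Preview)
Your proof is correct and follows the same strategy as the paper: reduce to $\overline{\m a_+}$ via the Cartan decomposition, insert \eqref{hes-1} and \eqref{ch2}, complete the square to center the Gaussian at $2t\gamma_p\rho$, and bound the resulting Gaussian tail. The only difference is organizational: the paper invokes the known estimate \eqref{ehe-1} for $\|h_t\|_p$ directly rather than bounding the denominator from below as you do, and it controls the polynomial prefactor by splitting the exterior into a near region $\{|H|<3t\gamma_p|\rho|\}$ (where the product in \eqref{hes-1} is dominated by a pure power of $t$) and a far region $\{|H|\ge 3t\gamma_p|\rho|\}$ (where it is dominated by a power of $|H|$), whereas your single envelope $Q_t(u)\lesssim(1+|u|)^D$ after rescaling accomplishes the same thing without the split.
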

\begin{proof}
The basic idea of the proof is similar to that of \cite[Lemma 2.1]{AE}. By \eqref{r} there exists $t_o\in(1,\infty)$ such that
\begin{equation}\label{deltat}
r(t) < \gamma_p|\rho| t,\:\:\:\text{for all}\:\: t > t_o.
\end{equation}
We define \begin{equation}\label{gtexph}
	g_t(H)= e^{-|\rho|^2t-\langle \rho, H\rangle-\frac{|H|^2}{4t}},\:\:\:\:\:\:H \in  \overline{\m a_+}.
\end{equation}
From the pointwise estimate of $h_t$ (see \eqref{hes-1}) it follows that for all $H \in  \overline{\m a_+}$
\begin{equation}
\label{ehe-45}
h_t(\exp H)^p \,\, {\lesssim g_t(H)}^p \times \left\{\begin{array}{ll}
        t^{-\frac{p\ell}{2}} & \text{if  } H\in B(0,3t\gamma_p|\rho|),\\

       |H|^{pL},&  \text{if  } |H| \ge 3t\gamma_p |\rho|,
        \end{array}\right.
\end{equation}
for all $t\in(1,\infty)$, where $L=\sum_{\alpha\in\Sigma_r^+}(m_{\alpha}+m_{2\alpha})/2$. We observe that
 \begin{eqnarray}
g_t(H)^pe^{2\langle \rho, H\rangle} & =& e^{-p|\rho|^2t+(2-p)\langle \rho, H\rangle-\frac{p|H|^2}{4t}}
\no \\
& =& e^{-\frac p{4t}\left({|H|^2}-4t\gamma_p\langle \rho, H\rangle+4|\rho|^2t^2\right)}\no \\
& =& e^{-\frac p{4t}|H-2t \gamma_p \rho|^2}e^{- \frac{4|\rho|^2t}{p'}} . \label{bs31}
 \end{eqnarray}
From \eqref{bs31} and the estimate of $\|h_t\|_p$ given in \eqref{ehe-1} we get for all $t\in(1,\infty)$
\begin{equation}
\label{bs322}
\|h_t\|_p^{-p}g_t(H)^pe^{2\langle \rho, H\rangle} \asymp t^{\frac{p\ell}{2p'}} e^{-\frac p{4t}|H-2t \gamma_p \rho|^2}.
\end{equation}
As $\Omega_t^p=B(2t\gamma_p\rho,r(t))\subset B(0,3t\gamma_p|\rho|)$ (see \eqref{deltat}), we note that
 \begin{align}
G\setminus K (\exp \Omega_t^p) K &=[K (\exp B(0, 3t\gamma_p|\rho|))K \setminus K (\exp \Omega_t^p) K]\no \\& \hspace{0.5cm}\sqcup [G \setminus K (\exp B(0, 3t\gamma_p|\rho|)) K]\label{setdecom}.
\end{align}
Therefore, it suffices to estimate the integrals of $\|h_t\|^{-p}h_t^p$ over each of these two sets.
From \eqref{ch2} it follows that the Jacobian $J(H)$ satisfies the estimate
\begin{equation*}
J(H) \lesssim e^{2\langle \rho, H\rangle},\hspace{0.2cm} \text{for}\:\:H \in \overline {\m a_{+}}.
\end{equation*}
Estimates \eqref{ehe-45} and \eqref{bs322} now implies that
\begin{eqnarray}
\label{ms1}
&&\no\|h_t\|_p^{-p} \int\limits_{K (\exp B(0, 3t\gamma_p|\rho|))K \setminus K (\exp \Omega_t^p) K}h_t(x)^p \, dx \\ \no
&\lesssim &  \|h_t\|_p^{-p} \displaystyle\int\limits_{ K (\exp B(0, 3t\gamma_p|\rho|))K \setminus K (\exp \Omega_t^p) K}
g_t(H)^p t^{-\frac{p\ell}{2}}J(H)\, dH\\ \no
&\lesssim & t^{-\frac{p\ell }{2}} t^{\frac{p\ell}{2p'}} \int\limits_{ |H-2t\gamma_p \rho |\geq r(t)}
e^{-\frac p{4t}|H-2t \gamma_p \rho|^2}\, dH\no\\
&\lesssim& t^{-\frac{\ell }{2}}  \int\limits_{ r(t)}^ {\infty}
e^{-\frac{ ps^2}{4t}}s^{\ell-1}\, ds=\int\limits_{ \frac {r(t)}{\sqrt t}}^{ \infty} e^{-\frac{ ps^2}{4}}s^{\ell-1}\, ds\lesssim
{ \left(\frac {r(t)}{\sqrt t}\right)}^{ -N},
\end{eqnarray}
for any $N >0$. Note that, to obtain the last inequality we have used the standard estimate of the incomplete gamma function:
\begin{equation*}
    \int_a^{\infty}s^{b-1}e^{-\frac{ps^2}{2}}\:ds=C_p\int_{\frac{pa^2}{2}}^{\infty}x^{\frac{b}{2}-1}e^{-x}\:dx\lesssim a^{b-1}e^{-\frac{pa^2}{2}},\:\:\:a>0,\:b>0.
\end{equation*}

Similarly, using \eqref{ehe-45} and \eqref{bs322} we obtain for all sufficiently large $t$
\begin{eqnarray}
\label{ms3}
&&\no \|h_t\|_p^{-p} \int\limits_{G \setminus K (\exp B(0, 3t\gamma_p|\rho|))K}h_t(x)^p \, dx\\\no &\lesssim &  \|h_t\|_p^{-p} \displaystyle\int\limits_{\overline{\m a_+} \setminus B(0, 3t\gamma_p|\rho|)\ }
g_t(H)^p |H|^{{ p L}}e^{2\langle \rho, H\rangle}\, dH\\ \no
&\lesssim & t^{\frac{p\ell}{2p'}} \int\limits_{|H| \ge  3t\gamma_p|\rho|} |H|^{{ p L}}
e^{-\frac p{4t}|H-2t \gamma_p \rho|^2}\, dH\\ \no
\no
&\lesssim & t^{\frac{p\ell}{2p'}} \int\limits_{|H-2 t\gamma_p\rho | \ge  \gamma_p|\rho|t}
|H|^{{ p L}}e^{-\frac p{4t}|H-2t \gamma_p \rho|^2}\, dH\\ \no
\no
&= & t^{\frac{p\ell}{2p'}} \int\limits_{|H| \ge  \gamma_p|\rho|t} |H+2 t\gamma_p \rho |^{{ p L}}
e^{-\frac p{4t}|H|^2}\, dH\\ \no
&\lesssim & t^{\frac{p\ell}{2p'}} \int\limits_{|H| \ge  \gamma_p|\rho|t} |H|^{{ p L}}
e^{-\frac p{4t}|H|^2}\, dH \hspace{2cm}(\text{as $|H+2 t\gamma_p \rho| \lesssim |H|$})\\ \no
&\lesssim & t^{\frac{p\ell}{2p'}} \int\limits_{ \gamma_p|\rho|t}^\infty s^{{ p L}}
e^{-\frac {ps^2}{4t}} s^{\ell-1}\, ds \\
&= & t^{\frac{p\ell}{2p'}+\frac \ell 2+\frac{pL}2} \int\limits_{ \gamma_p|\rho|\sqrt t}^\infty
e^{-\frac {ps^2}{4}} s^{pL+\ell-1}\, ds\no\\
&\leq&t^{\frac{p\ell}{2p'}+\frac \ell 2+\frac{pL}2} \int\limits_{\frac {r(t)}{\sqrt t} }^\infty
e^{-\frac {ps^2}{4}} s^{pL+\ell-1}\, ds\lesssim { \left(\frac {r(t)}{\sqrt t}\right)}^{ -N},
\end{eqnarray}
for any $N >0$, where we have used \eqref{deltat} to get the second last inequality and estimate of the incomplete gamma function to get the last inequality. The result now follows from \eqref{setdecom} and the estimates
\eqref{ms1} and \eqref{ms3}.\end{proof}

 For a vector $ v \in \liea$ and a smooth function $F$ on $\liea$, we denote $\partial_v F$ the directional derivative of $F$ in the direction of $v$ and for a set $E$, let $\mathcal{P}(E)$ denotes its power set. The following proposition, whose proof closely follows that of \cite[Proposition 3.1]{AE}, is the main ingredient for the proof of Theorem \ref{cl16}.
\begin{proposition}
\label{cl10}
If $p\in (1,2)$ and $f \in C_c^\infty(G//K)$ then
\begin{equation}
\lim_{t \to \infty}\|h_t\|_p ^{-1}\|f \ast h_t -\what f(i \gamma_p \rho)h_t\|_{L^p(K(\exp \Omega_t^p) K)}=0.
\end{equation}
\end{proposition}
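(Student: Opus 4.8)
The plan is to follow the Fourier-analytic route of \cite[Proposition 3.1]{AE}, adapting it to the exponent $p\in(1,2)$ and the shifted critical centre $2t\gm_p\rho$. Since $f\in C_c^\infty(G//K)$, its spherical transform $\what f$ is entire of exponential type (Paley--Wiener) and, together with $\what{h_t}(\lambda)=e^{-t(|\lambda|^2+|\rho|^2)}$, the inversion formula \eqref{inv-1} applies to both $f\ast h_t$ and $h_t$. Subtracting, I would first record that for $H\in\overline{\liea_+}$,
\begin{equation*}
(f\ast h_t-\what f(i\gm_p\rho)h_t)(\exp H)=C_o\int_{\liea}\left(\what f(\lambda)-\what f(i\gm_p\rho)\right)e^{-t(|\lambda|^2+|\rho|^2)}\varphi_\lambda(\exp H)|\hc(\lambda)|^{-2}\,d\lambda.
\end{equation*}
Next I would insert the Harish--Chandra expansion \eqref{es1}, use the factorisation $|\hc(\lambda)|^{-2}=\hc(w\cdot\lambda)^{-1}\hc(-w\cdot\lambda)^{-1}$ together with the $W$-invariance of $\what f$, of the Gaussian and of $|\hc|^{-2}$, and change variables $\lambda\mapsto w\cdot\lambda$ in each Weyl summand. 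All $|W|$ terms then coincide, collapsing the expression to
\begin{equation*}
C_o|W|\int_{\liea}\left(\what f(\lambda)-\what f(i\gm_p\rho)\right)e^{-t(|\lambda|^2+|\rho|^2)}\Phi_\lambda(H)\hc(-\lambda)^{-1}\,d\lambda,
\end{equation*}
into which I substitute the series \eqref{es2} for $\Phi_\lambda(H)=e^{\langle i\lambda-\rho,H\rangle}\sum_{q}\Gamma_q(\lambda)e^{-\langle q,H\rangle}$.

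The decisive step is a contour shift. For fixed $H$, completing the square in $e^{-t|\lambda|^2+i\langle\lambda,H\rangle}$ and translating the contour from $\liea$ to $\liea+iH/(2t)$ extracts exactly the prefactor $g_t(H)=e^{-|\rho|^2t-\langle\rho,H\rangle-|H|^2/(4t)}$ of \eqref{gtexph} and replaces $\lambda$ inside $\what f$, $\Gamma_q$ and $\hc(-\lambda)^{-1}$ by $\lambda+iH/(2t)$. This is legitimate because the integrand is holomorphic on the tube $\{\,\Im\lambda=sH/(2t),\ s\in[0,1]\,\}\subset\liea+i\overline{\liea_+}$ — there $\what f$ is entire, $\lambda\mapsto\hc(-\lambda)^{-1}$ is holomorphic with the polynomial bound \eqref{cfest}, and the $\Gamma_q$ have no poles and obey \eqref{cfs1} — while the Gaussian kills the contributions at infinity. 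After rescaling $\lambda\mapsto\lambda/\sqrt t$, the $q=0$ term becomes $g_t(H)\,t^{-\ell/2}I_0(t,H)$, where
\begin{equation*}
I_0(t,H)=\int_{\liea}\left(\what f\Big(\tfrac{\lambda}{\sqrt t}+\tfrac{iH}{2t}\Big)-\what f(i\gm_p\rho)\right)e^{-|\lambda|^2}\hc\Big(-\tfrac{\lambda}{\sqrt t}-\tfrac{iH}{2t}\Big)^{-1}d\lambda.
\end{equation*}
The role of the centre $2t\gm_p\rho$ is now visible: on $\Omega_t^p$ one has $|H/(2t)-\gm_p\rho|\le r(t)/(2t)\to0$ by \eqref{r}, so the argument of $\what f$ tends to $i\gm_p\rho$ uniformly in $H$ and the difference tends to $0$; the remaining factors converge to the finite constant $\hc(-i\gm_p\rho)^{-1}$ and are dominated, uniformly for $t>1$, by $e^{-|\lambda|^2}$ times a polynomial in $\lambda$ via \eqref{cfest}. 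Dominated convergence then yields $\sup_{H\in\Omega_t^p}|I_0(t,H)|=\ve_t\to0$. Each term with $q\neq0$ carries an additional factor $e^{-\langle q,H\rangle}$ independent of $\lambda$; since $\langle q,H\rangle\ge 2t\gm_p\langle q,\rho\rangle-|q|r(t)\gtrsim t|q|$ on $\Omega_t^p$, the bounds \eqref{cfs1} make $\sum_{q\neq0}e^{-\langle q,H\rangle}|I_q(t,H)|$ superexponentially small, hence negligible against $t^{-\ell/2}$.

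Combining these estimates gives $|(f\ast h_t-\what f(i\gm_p\rho)h_t)(\exp H)|\lesssim \ve_t\,t^{-\ell/2}g_t(H)$ uniformly on $\Omega_t^p$. Raising to the $p$-th power, integrating against the Jacobian with $J(H)\lesssim e^{2\langle\rho,H\rangle}$, and invoking the identity \eqref{bs322}, I would obtain
\begin{equation*}
\|h_t\|_p^{-p}\|f\ast h_t-\what f(i\gm_p\rho)h_t\|_{L^p(K(\exp\Omega_t^p)K)}^p\lesssim \ve_t^p\,t^{-\frac{p\ell}{2}}\,t^{\frac{p\ell}{2p'}}\int_{\liea}e^{-\frac{p}{4t}|H-2t\gm_p\rho|^2}\,dH.
\end{equation*}
The Gaussian integral contributes a factor $t^{\ell/2}$, and because $-\tfrac{p\ell}{2}+\tfrac{p\ell}{2p'}+\tfrac{\ell}{2}=0$ (using $\tfrac1{p'}=1-\tfrac1p$) the powers of $t$ cancel exactly, leaving $O(\ve_t^p)\to0$. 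I expect the main obstacle to be the rigorous justification of the contour shift and, more delicately, the \emph{uniformity} in $H$ over the expanding region $\Omega_t^p$ in the dominated-convergence argument, together with the term-by-term control of the Harish--Chandra series; the exact cancellation of the $t$-powers is precisely what forces the choice of centre $2t\gm_p\rho$ and of the constant $\what f(i\gm_p\rho)$.
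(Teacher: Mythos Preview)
Your proposal is correct and follows the same overall route as the paper: Fourier inversion, Harish--Chandra expansion, contour shift to $\lambda+iH/(2t)$ completing the square, rescaling $\lambda\mapsto\lambda/\sqrt t$, and the final integration via \eqref{bs322} with the exact cancellation of $t$-powers. The one genuine difference is that you keep $\hc(-\lambda)^{-1}$ intact, whereas the paper (following \cite{AE}) writes $\hc(-\lambda)^{-1}=\pi(-i\lambda)\,\hb(-\lambda)^{-1}$, expands the polynomial $\pi$, and integrates by parts, producing the $T_1,T_2$ splitting of \eqref{eqth}--\eqref{bs18}; similarly, you kill the $q\neq0$ terms via the super-exponential smallness of $e^{-\langle q,H\rangle}$ on $\Omega_t^p$, while the paper tracks the factor $r(t)/t$ uniformly through all $q$ and sums. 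Your shortcut works here because on $\Omega_t^p$ the imaginary part $H/(2t)$ stays in a fixed compact subset of $\liea_+$, so \eqref{cfest} gives $|\hc(-\lambda/\sqrt t-iH/(2t))^{-1}|\lesssim(1+|\lambda|)^m$ uniformly in $H$ and $t$; the paper's $\hb$-route is more systematic and closer to \cite{AE}. The one soft spot you flag---uniformity in $H$ over the moving region $\Omega_t^p$---is handled exactly as in \eqref{bs3}: replace the dominated-convergence phrasing by the mean-value bound $|U(\lambda/\sqrt t+iH/(2t))|\lesssim(1+|\lambda|)\,r(t)/t$, which gives $\sup_{H\in\Omega_t^p}|I_0(t,H)|\lesssim r(t)/t$ directly.
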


\begin{proof}
We set \begin{equation}\label{ulam}
	U(\lambda)=\what f(\lambda)-\what f(i\gamma_p \rho),\:\:\:\:\:\:\la\in\mathfrak{a}.
\end{equation}
Based on the approach demonstrated in \cite[p.12-14]{AE}, we can similarly obtain for $H \in \m a$
\begin{equation}\label{lts-1}
	f \ast h_t(\exp H)- \what f(i\gamma_p \rho)h_t(\exp H)
	=C_o |W|e^{-|\rho|^2 t-\langle \rho, H\rangle} \sum_{q \in 2Q}e^{-\langle q, H\rangle} E_q(t, H),
\end{equation}
where
 \begin{eqnarray}
\label{bs6}
E_q(t,H)&=&
\int_{\mathfrak{a}} e^{-|\lambda|^2}   \left(\prod_{\alpha'' \in \Sigma_r^+\setminus\Sigma'}(-i\sqrt t\partial_{\alpha''}) \right)
 \frac{\Gamma_q\left(\frac{\lambda}{\sqrt t}+i\frac{H}{2t}\right)}{\hb\left(-\frac{\lambda}{\sqrt t}-i\frac{H}{2t}\right) } U\left(\frac{\lambda}{\sqrt t}+i\frac{H}{2t}\right)\,d\lambda\no\\&&\times\:\: 2^{-|\Sigma_r^+|} t^{-\frac{\nu}{2}}e^{-\frac{|H|^2}{4t}}\sum_{\Sigma'\in \mathcal{P}(\Sigma_r^+)} \left\lbrace \prod_{\alpha' \in \Sigma'} \langle \alpha', H \rangle  \right\rbrace
 \end{eqnarray}
For $j \in \N$, let $\nabla_\lambda^j:=(\frac {\partial^j}{\partial \lambda_1^j}, \frac {\partial^j}{\partial \lambda_2^j}, \cdots, \frac {\partial^j}{\partial \lambda_{\ell}^j})$. Since $f \in C_c^\infty(G//K)$, the Paley--Weiner theorem \cite[p.450]{Helga-3} implies that $\what f(\lambda)$ is a $W$-invariant holomorphic function on $\liea_{\C}$ and there exists a positive constant $C'$ such that for every $j \in \N$ and for every $k \in \N$
\begin{equation}
\label{pw1}
|\nabla_\lambda^j \what f(\lambda)| \le C_{k,j}(1+|\lambda|)^{-k}e^{C'| \Im \lambda |},\:\:\:\:\:\lambda \in \liea_{\C},
\end{equation}
where $C_{k,j}$ is a positive constant depending only on $k$ and $j$. For any fixed $H\in \Omega_t^p$, using \eqref{pw1} together with the mean value theorem in \eqref{ulam}, we get for all large $t$ and $\lambda \in \liea$
\begin{eqnarray}
\label{bs3}
\no\left|U\left(\frac{\lambda}{\sqrt t}+i\frac{H}{2t}\right)\right| &\lesssim& \left |\frac{\lambda}{\sqrt t}+i\frac{H}{2t}- i\gamma_p \rho\right| e^{C'\frac{|H|}{2t}} \\ \no &\le&
\left(\frac{|\lambda|}{\sqrt t}+ \frac{|H-2t \gamma_p \rho|}{2t}\right ) e^{C'\frac{|H|}{2t}}\\ \no &\le&
\left(\frac{|\lambda|}{\sqrt t}+ \frac{r(t)}{2t}\right ) e^{C'\frac{|H|}{2t}}\\
&\lesssim & \left({|\lambda|}+ 1\right ) \frac{r(t)}{t},
\end{eqnarray}
as $H \in \Omega_t^p$ and $1<r(t)/\sqrt t$. Using the definition of $\Omega_t^p$ we observe that
\begin{equation}\label{hbytest}
\frac{|H|}{t}\leq\frac{|H-2t\gamma_p\rho|+2t\gamma_p|\rho|}{t}\leq \frac{r(t)}{t}+2\gamma_p|\rho|\leq 3\gamma_p|\rho|,\:\:\:\:\:\text{for all $t>t_o$}.
\end{equation}
Hence, for all $\alpha \in \Sigma_r^+$ and $t\in (t_o,\infty)$
\begin{equation}
\label{bs1}
1+\left |\left\langle \alpha, -\frac{\lambda}{\sqrt t}-i\frac{H}{2t} \right\rangle  \right| \lesssim \left(1+\frac{|\langle \alpha, \lambda\rangle|}{\sqrt t}\right)\left( 1+ \frac {\langle \alpha, H \rangle}{2t}\right) \lesssim (1+|\lambda|).
\end{equation}
 It now follows from \eqref{bs1} and \eqref{hb1} that
\begin{eqnarray}
\label{bs2}
\left|\hb\left(-\frac{\lambda}{\sqrt t}-i\frac{H}{2t}\right)\right|^{-1} &\lesssim&
 \prod_{\alpha \in \Sigma_r^+}\left(1+\left |\left\langle \alpha, -\frac{\lambda}{\sqrt t}-i\frac{H}{2t} \right\rangle  \right| \right)^{(\frac{m_\alpha+m_{2\alpha}}{2}-1)_+} \no\\&\lesssim& (1+|\lambda|)^m,
\end{eqnarray}
where $m= \displaystyle \sum_{\alpha \in \Sigma_r^+}{(\frac{m_\alpha+m_{2\alpha}}{2}-1)_+}.$

Using \eqref{bs2} in \eqref{hb2}, we get that for all large $t$
\begin{eqnarray}
\label{bs4}
\left|(\sqrt t\nabla_\lambda)^j \left(\hb\left(-\frac{\lambda}{\sqrt t}-i\frac{H}{2t}\right)^{-1}\right) \right|\lesssim
\left|\hb\left(-\frac{\lambda}{\sqrt t}-i\frac{H}{2t}\right)\right|^{-1} \lesssim (1+|\lambda|)^m.
\end{eqnarray}
In view of \eqref{cfs1} and \eqref{cfs2} we also have
\begin{eqnarray}
\label{bs15}
\left|(\sqrt t\nabla_\lambda)^j \Gamma_q\left(\frac{\lambda}{\sqrt t}+i\frac{H}{2t}\right) \right| \lesssim (1+|q|)^d.
\end{eqnarray}
After applying the Leibniz rule in \eqref{bs6}, we notice that $E_q(t,H)$ can be rewritten in the following form
\begin{equation}\label{eqth}
E_q(t,H)=2^{-|\Sigma_r^+|} t^{-\frac{\nu}{2}}e^{-\frac{|H|^2}{4t}}\int_{\m a}e^{-|\la |^2}(T_1(\la,H)+T_2(\la,H))\:d\la,
\end{equation}
where $T_1(\la,H)$ is the sum consisting of terms that do not contain any derivative of $U$, while $T_2(\la,H)$ is the sum of terms that contains at least one derivative of $U$. Now, from \eqref{bs3}, \eqref{bs4} and \eqref{bs15} it follows that
\begin{eqnarray}
\label{bs17}
\no|T_1(\lambda,H)| &\lesssim & (1+|\la|)^{m+1} \left\lbrace \prod_{\alpha' \in \Sigma'} \langle \alpha', H \rangle  \right\rbrace (1+ |q|)^d \frac{r(t)} t  \\
&\lesssim & (1+|\la|)^{m+1} \left( \prod_{\Sigma_r^+}(1+|H|) \right) (1+ |q|)^d \frac{r(t)} t
\no \\ &\lesssim & t^{|\Sigma_r^+|}(1+|\la|)^{m+1} (1+ |q|)^d \frac{r(t)} t,
\end{eqnarray}
for all large $t$, where we have used \eqref{hbytest} in the last inequality. Similarly, from \eqref{pw1}, \eqref{bs4} and \eqref{bs15} we get
\begin{eqnarray}
\label{bs18}
|T_2(\la, H) | &\lesssim& (1+|\la|)^m \left\lbrace \prod_{\alpha' \in \Sigma'} \langle \alpha', H \rangle  \right\rbrace (1+ |q|)^d \no \\
&\lesssim &(1+|\la|)^m (1+|H|)^{|\Sigma_r^+|-1}(1+ |q|)^d,\hspace{1.5cm}(\text{as $|\Sigma'| <  |\Sigma_r^+|$})\no\\
\no\\
&\lesssim & t^{|\Sigma_r^+|-1}(1+|\la|)^m (1+ |q|)^d\hspace{3cm} (\text{as $1+|H| \lesssim t$})\no\\
 \no \\ &\lesssim & t^{|\Sigma_r^+|}(1+|\la|)^{m+1} (1+ |q|)^d \frac{r(t)} t,
\end{eqnarray}
as $r(t)\in (1,\infty)$ for all large $t$.
Hence, combining \eqref{eqth}, \eqref{bs17} and \eqref{bs18} we get
\begin{equation}
\label{bs19}
|E_q(t, H) |\lesssim t^{-\frac{\ell}{2}}e^{-\frac{|H|^2}{4t}} (1+ |q|)^d \frac{r(t)} t,\hspace{1.5cm}\text{as \:\:$\nu= \ell + 2|\Sigma_r^+| $}.
\end{equation}
Thus, for all sufficiently large $t$ and $H \in \Omega_t^p$, from \eqref{lts-1} and \eqref{bs19} we get
 \begin{eqnarray}\label{finalest}
 &&|f \ast h_t(\exp H)- \what f(i\gamma_p \rho)h_t (\exp H)|\no\\
 &\lesssim&\frac{r(t)} t t^{-\frac{\ell}{2}} e^{-|\rho|^2t-\langle \rho, H\rangle-\frac{|H|^2}{4t}} \overbrace{\sum_{q \in 2Q}(1+ |q|)^d e^{-\langle q, H\rangle}}^{\lesssim 1}\lesssim   \frac{r(t)} t t^{-\frac{\ell}{2}} g_t(H),
 \end{eqnarray}
where we recall the definition of $g_t(H)$ from \eqref{gtexph}. Integrating according to the  Cartan decomposition and using \eqref{finalest}, \eqref{bs322}  successively, we get
\begin{eqnarray}
&&\|h_t\|_p^{-p}\int_{K(\exp \Omega_t^p) K}|f \ast h_t(x)- \what f(i\gamma_p \rho)h_t (x)|^p \, dx \no \\ \no
&\lesssim& {\left( \frac{r(t)} t\right)}^p t^{-\frac{\ell p}{2}}\int_{\Omega_t^p} \|h_t\|_p^{-p}g_t(H)^pe^{2\langle \rho, H\rangle} \, dH\no \\
&\lesssim& {\left( \frac{r(t)} t\right)}^p t^{-\frac{\ell p}{2}}\int_{\Omega_t^p}t^{\frac{\ell p}{2p'}} e^{-\frac p{4t}|H-2t \gamma_p \rho|^2}\, dH\no \\ \no
&\lesssim& {\left( \frac{r(t)} t\right)}^p t^{-\frac{\ell }{2}}\int_{B(2t\gamma_p \rho, r(t))}e^{-\frac p{4t}|H-2t \gamma_p \rho|^2}\, dH \\ \no
&\lesssim& {\left( \frac{r(t)} t\right)}^p t^{-\frac{\ell }{2}}\int_0^{r(t)}e^{-\frac {ps^2}{4t}} s^{l-1}\, ds\no \\
&\lesssim& {\left( \frac{r(t)} t\right)}^p \int_0^{\frac{r(t)}{\sqrt t}}e^{-\frac {ps^2}{4}} s^{l-1}\, ds\lesssim{\left( \frac{r(t)} t\right)}^p \int_0^{\infty}e^{-\frac {ps^2}{4}} s^{l-1}\, ds\lesssim{\left( \frac{r(t)} t\right)}^p. \no
\end{eqnarray}
The result now follows from the fact that $r(t)/t$ goes to zero as $t$ tends to infinity.
\end{proof}

\begin{proposition}
\label{bs47}
If $p\in (1,2)$ and $f \in C_c^\infty(G//K)$ then
\begin{equation}
\lim_{t \to \infty}\|h_t\|_p ^{-1}\|f \ast h_t -\what f(i \gamma_p \rho)h_t\|_{L^p(G\setminus K(\exp \Omega_t^p) K)}=0.
\end{equation}
\end{proposition}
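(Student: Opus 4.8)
The plan is to reduce everything, via the triangle inequality, to two pieces that are each controlled by the $L^p$-concentration already established in Proposition \ref{cl5}; this is the natural companion to Proposition \ref{cl10}, which handled the critical region $K(\exp\Omega_t^p)K$ itself. Writing $S_t = G \setminus K(\exp\Omega_t^p)K$, I first estimate
\[
\|f \ast h_t - \what f(i\gamma_p\rho)h_t\|_{L^p(S_t)} \le \|f \ast h_t\|_{L^p(S_t)} + |\what f(i\gamma_p\rho)|\,\|h_t\|_{L^p(S_t)}.
\]
After dividing by $\|h_t\|_p$, the second summand tends to $0$ directly by Proposition \ref{cl5}, since $\what f(i\gamma_p\rho)$ is a fixed finite constant (as $f \in C_c^\infty(G//K)$). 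Thus the whole statement reduces to showing $\|h_t\|_p^{-1}\|f \ast h_t\|_{L^p(S_t)} \to 0$.

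For this I would exploit that the compact support of $f$ forces $f \ast h_t$ to inherit the concentration of $h_t$. Fix $R>0$ with $\mathrm{supp}\,f \subseteq K(\exp\overline{B(0,R)})K$, and split $h_t = h_t^{\mathrm{in}} + h_t^{\mathrm{out}}$, where $h_t^{\mathrm{in}}$ is the restriction of $h_t$ to the shrunken region $K(\exp B(2t\gamma_p\rho, r(t)/2))K$ and $h_t^{\mathrm{out}} = h_t - h_t^{\mathrm{in}}$; both are $K$-biinvariant since the region is. For $f \ast h_t^{\mathrm{in}}$ I would argue its support lies in $K(\exp\Omega_t^p)K$ for all large $t$, so it contributes nothing over $S_t$: indeed $f \ast h_t^{\mathrm{in}}$ is supported in $(\mathrm{supp}\,f)\cdot K(\exp B(2t\gamma_p\rho, r(t)/2))K$, and the containment of this product inside $K(\exp\Omega_t^p)K$ follows once the Cartan projection $g \mapsto g^+$ is controlled under left multiplication by the bounded factor: there is $C_R$ with $|(yz)^+ - z^+| \le C_R$ whenever $|y^+| \le R$, and since the radial gap $r(t)/2 \to \infty$ the bounded shift $C_R$ is eventually absorbed. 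For $f \ast h_t^{\mathrm{out}}$ I would not localize but bound its full norm by the Herz inequality \eqref{hqwe1},
\[
\|f \ast h_t^{\mathrm{out}}\|_p \le \what{|f|}(i\gamma_p\rho)\,\|h_t^{\mathrm{out}}\|_p = \what{|f|}(i\gamma_p\rho)\,\|h_t\|_{L^p(G \setminus K(\exp B(2t\gamma_p\rho, r(t)/2))K)}.
\]
As $r(t)/2$ again satisfies the two conditions in \eqref{r}, Proposition \ref{cl5} with radius $r(t)/2$ gives $\|h_t\|_p^{-1}\|h_t^{\mathrm{out}}\|_p \to 0$, whence $\|h_t\|_p^{-1}\|f \ast h_t^{\mathrm{out}}\|_p \to 0$. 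Combining the two pieces finishes the argument.

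The main obstacle is the geometric containment, i.e.\ the vector-valued perturbation estimate $|(yz)^+ - z^+| \le C_R$ for $y$ in a fixed compact set. The \emph{norm} $|(yz)^+|$ is easily controlled to within $d(o, y\cdot o) \le R$ of $|z^+|$ by the triangle inequality for the Riemannian distance; but $\Omega_t^p$ is centered at the \emph{moving} point $2t\gamma_p\rho$, so one genuinely needs control of the full vector $(yz)^+$, not merely its length. This follows from the standard description of the Cartan projection through singular values, equivalently through highest-weight norms of finite-dimensional representations of $G$, which yields componentwise bounds of the form $\|(yz)^+ - z^+\| \lesssim \|y^+\|$; I would invoke this as the one external geometric input. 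Everything else is a routine assembly of the triangle inequality, the Herz bound \eqref{hqwe1}, and two applications of Proposition \ref{cl5} (with radii $r(t)$ and $r(t)/2$).
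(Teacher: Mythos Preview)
Your proof is correct and follows the same overall strategy as the paper: reduce via the triangle inequality to controlling $\|h_t\|_p^{-1}\|f\ast h_t\|_{L^p(S_t)}$, then use the compact support of $f$ together with the Cartan--projection perturbation estimate to push the problem back to Proposition~\ref{cl5} applied with a slightly smaller radius.

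The implementation differs in one place. You split $h_t=h_t^{\mathrm{in}}+h_t^{\mathrm{out}}$ and handle the two pieces by a support argument and the Herz bound~\eqref{hqwe1}, respectively. The paper instead applies Minkowski's integral inequality directly to $f\ast h_t$ and changes variable $z=xy^{-1}$ in the inner integral, obtaining
\[
\|f\ast h_t\|_{L^p(S_t)}\le \|f\|_1\,\|h_t\|_{L^p\bigl(G\setminus K(\exp B(2t\gamma_p\rho,\,r(t)-\delta))K\bigr)},
\]
where $\delta$ is the support radius of $f$. This is a bit more direct: it avoids introducing the auxiliary splitting and replaces your appeal to Herz by the elementary $L^1$ bound on $f$. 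Both routes, however, hinge on exactly the same geometric input you identified---that multiplication by an element of bounded Cartan projection shifts the Cartan projection by at most a bounded amount---which the paper simply quotes from \cite[p.~16]{AE}. Your radius $r(t)/2$ and the paper's $r(t)-\delta$ play interchangeable roles, since Proposition~\ref{cl5} only needs the radius to satisfy~\eqref{r}.
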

\begin{proof}
We assume that the support of $f$ is contained in $K(\exp B(0,\delta))K$ for some $\delta>0$. By using the Minkowski integral inequality, we get
\begin{eqnarray}
\label{cl1}
&&\|f \ast h_t\|_{L^p(G\setminus K(\exp \Omega_t^p) K)}\no\\
&=& \left(\int\limits_{G\setminus K(\exp \Omega_t^p) K}\left|\int \limits_{K(\exp B(0,\delta))K}f(y) h_t(xy^{-1})\,dy\right|^p \,dx\right)^{\frac 1p}\no\\
&\le&  \int \limits_{K(\exp B(0,\delta))K} |f(y)|\left(\int\limits_{G\setminus K(\exp \Omega_t^p) K} h_t(xy^{-1})^p\,dx\right)^{\frac 1p} \,dy.
\end{eqnarray}
For $x\in G\setminus K(\exp \Omega_t^p) K$ and $y\in K(\exp B(0,\delta))K$, it follows that
\begin{equation*}
xy^{-1}\in G\setminus K(\exp B(2t\gamma_p \rho, r(t)-\delta))K.
\end{equation*}
This was proved for $p=1$ in \cite[p.16]{AE} but the same proof works in our case. Using this observation in the inner integral on the right hand side of \eqref{cl1}, we obtain

\begin{eqnarray}
\label{cl4}
&&\|h_t\|_p ^{-1} \|f \ast h_t\|_{L^p(G\setminus K(\exp \Omega_t^p) K)}\no\\
&\le&\|h_t\|_p ^{-1} \int \limits_{K(\exp B(0,\delta))K} |f(y)|\left(\int\limits_{G\setminus K(\exp B(2t\gamma_p \rho, r(t)-\delta)) K} h_t(z)^p\,dz\right)^{\frac 1p} \,dy\no\\
&=&\|f\|_1\|h_t\|_p ^{-1}\|h_t\|_{L^p(G\setminus K(\exp B(2t\gamma_p \rho, r(t)-\delta)) K)}.
\end{eqnarray}
From Proposition \ref{cl5} and \eqref{cl4} it is clear that
\begin{equation}
\label{cl8}
\lim_{t \to \infty}\|h_t\|_p ^{-1} \|f \ast h_t\|_{L^p(G\setminus K(\exp \Omega_t^p) K)}=0.
\end{equation}
Since
 \begin{eqnarray*}
&& \|h_t\|_p ^{-1}\|f \ast h_t -\what f(i \gamma_p \rho)h_t\|_{L^p(G\setminus K(\exp \Omega_t^p) K)} \no \\
 & \leq &\|h_t\|_p ^{-1}\|f \ast h_t\|_{L^p(G\setminus K(\exp \Omega_t^p) K)}+|\what f(i \gamma_p \rho)|\|h_t\|_p ^{-1}\|h_t\|_{L^p(G\setminus K(\exp \Omega_t^p) K)},
\end{eqnarray*}
the result follows from \eqref{cl8} and Proposition \ref{cl5}.
\end{proof}
By combining Proposition \ref{cl10}, Proposition \ref{bs47}, and Proposition \ref{p=2}, we obtain the proof of Theorem \ref{cl16}, (a) for the special case of $f\in C_c^\infty(G//K)$:
\begin{equation}\label{mlemma}
\lim_{t \to \infty}\|h_t\|_p ^{-1}\|f \ast h_t -\what f(i \gamma_p \rho)h_t\|_p=0,\:\:\:p\in(1,2].
\end{equation}

\begin{proof}[\textbf{Completion of the proof of Theorem \ref{cl16}}]
We first prove $(a)$, that is, we assume that $p\in (1,2]$ (the case $p=1$ being proved in \cite{AE}). Let $\epsilon >0$ be given. As $C_c^\infty(G//K)$ is dense in  $\mathcal L^p(G//K)$, we choose $g \in C_c^\infty(G//K)$ such that
\begin{equation}
\label{cl18}
| \what g (i\gamma_p \rho) -\what f(i \gamma_p \rho) |
\le \what{|f-g|}(i\gamma_p \rho) <\epsilon.
\end{equation}
 Applying the Herz criterion \eqref{hqwe1}, we get
\begin{equation}
\label{cl15}
\|(f-g )\ast h_t \|_p \le\what{|f-g|}(i\gamma_p \rho)  \|h_t\|_p < \epsilon \|h_t\|_p .
\end{equation}
As $g \in C_c^\infty(G//K)$, using  \eqref{mlemma}, we obtain a $t_o \in(0,\infty)$ such that
\begin{equation}
\label{cl20}
\|g \ast h_t -\what g(i \gamma_p \rho)h_t\|_{p} < \epsilon\|h_t\|_p,
\end{equation}
for all $t > t_o.$
Using \eqref{cl18}-\eqref{cl20}, it follows that for all $t>t_o$
\begin{eqnarray*}
&&\no\|f \ast h_t -\what f(i \gamma_p \rho)h_t\|_p \\\no
&\le &\|(f-g )\ast h_t \|_p+ \| g \ast h_t -\what g(i \gamma_p \rho)h_t\|_p +| \what g (i\gamma_p \rho) -\what f(i \gamma_p \rho)|\|h_t\|_p
\\ &<& 3\epsilon \|h_t \|_p.
\end{eqnarray*}
Hence,
\begin{equation*}
\lim_{t \to \infty}\|h_t\|_p ^{-1}\|f \ast h_t -\what f(i \gamma_p \rho)h_t\|_{p}=0.
\end{equation*}

To prove $(b)$, we first show that if $f\in \mathcal L^2(G//K)$ then $f\ast h_t\in L^{\infty}(G//K)$. From the pointwise estimate of $h_t$ (see \eqref{hes-1}) it follows that for each $t\geq 1$, there exists a positive constant $C_t$ such that
\begin{equation*}
\frac{h_t(\exp H)}{\varphi_0(\exp H)}\leq C_t,\:\:\:\:\:\:\text{for all $H\in \overline{\m a_+}$}.
\end{equation*}
Hence, for all $x\in G$,
\begin{eqnarray}\label{point}
|f\ast h_t(x)|&\le & \int_G |f(y^{-1}x)|h_t(y)\:dy\nonumber\\
&=& \int_G |f(y^{-1}x)|\varphi_0(y)\frac{h_t(y)}{\varphi_0(y)}\:dy\nonumber\\
&\le & C_t|f|\ast\varphi_0(x)=C_t\what {|f|} (0)\varphi_0(x),
\end{eqnarray}
by the relation \eqref{spproj}. This implies that $f\ast h_t\in L^{\infty}(G//K)$. Since $f\ast h_t\in L^2(G//K)$ (see \eqref{hqwe1}), it follows by convexity that $f\ast h_t\in L^p(G//K)$, for all $p\in (2,\infty]$. The convolution inequality \eqref{cks} and the semigroup property of $h_t$ now implies that for any $p\in (2,\infty]$,
\begin{eqnarray}
\|f\ast h_t-\what f(0)h_t\|_p&=& \|f\ast h_{t/2}\ast h_{t/2}-\what f(0) h_{t/2}\ast h_{t/2}\|_p \no\\
&\le &  \|f\ast h_{t/2}-\what f(0) h_{t/2}\|_2\| h_{t/2}\|_2.
\no
\end{eqnarray}
Hence,
\begin{equation}\label{comparison}
\frac{\|f\ast h_t-\what f(0)h_t\|_p}{\|h_t\|_p}\leq \frac{\|f\ast h_{t/2}-\what f(0)h_{t/2}\|_2}{\|h_{t/2}\|_2}\times \frac{\|h_{t/2}\|_2^2}{\|h_t\|_p}.
\end{equation}
The estimates in \eqref{ehe-1} show that for all large $t$,
\begin{equation*}
\frac{\|h_{t/2}\|_2^2}{\|h_t\|_p}\asymp \left( \frac{t}{2}\right)^{-\frac{2\nu}{4}}e^{-2|\rho|^2t/2}t^{\frac{\nu}{2}}e^{|\rho|^2 t}\asymp 1.
\end{equation*}
Using this observation in \eqref{comparison} we obtain
\begin{equation}
\label{ra1}
\frac{\|f\ast h_t-\what f(0)h_t\|_p}{\|h_t\|_p}\le C \frac{\|f\ast h_{t/2}-\what f(0)h_{t/2}\|_2}{\|h_{t/2}\|_2}.
\end{equation}
So, as $\gamma_2=0$, it follows from part $(a)$ that the ratio in the left hand side of \eqref{ra1} goes to zero as $t$ goes to infinity. This completes the proof of part (b).
\end{proof}

\begin{rem}\label{withoutkz}
\textup{Using the Fourier inversion formula \eqref{inv-1}, one can prove the second part of Theorem \ref{cl16}, namely the case $p\in(2,\infty]$, without using the semigroup property of $h_t$ and the Kunze--Stein phenomenon. Indeed, as $f\in\mathcal{L}^2(G//K)$, $\widehat{f}$ is a bounded continuous function on $\mathfrak{a}$. Thus, using \eqref{inv-1} we write
\begin{equation*}
	f\ast h_t(x)-\what{f}(0)h_t(x)=C_o\int_{\mathfrak{a}}\left(\what f(\la)-\what f(0)\right)e^{-(|\la|^2+|\rho|^2)t}\varphi_{\la}(x)|\hc(\la)|^{-2}\:d\la,\:\:\:x\in X.
	\end{equation*}
Applying the Minkowski's inequality and the estmate $\|\varphi_{\la}\|_p\leq\|\varphi_{0}\|_p$, we obtain
\begin{equation*}
\|f\ast h_t-\what f(0)h_t\|_p\leq C_o\|\varphi_{0}\|_p\int_{\mathfrak{a}}|\what f(\la)-\what f(0)|e^{-(|\la|^2+|\rho|^2)t}|\hc(\la)|^{-2}\:d\la.
\end{equation*}
Using the change of variable $\displaystyle{\la\mapsto\frac{\la}{\sqrt{t}}}$, and proceeding as in the proof of Proposition \ref{p=2},  we get
\begin{equation*}
\|h_t\|_p^{-1}\|f\ast h_t-\what f(0)h_t\|_p\lesssim\int_{\m a} |\what {f}(\frac{\la}{\sqrt{t}})-\what f(0)| e^{-|\lambda|^2}|P(\la)|\:d\lambda,
	\end{equation*}
where $P(\la)$ is a polynomial in $\la$. We complete the proof by applying the dominated convergence theorem on the right hand side of the inequality above.}
\end{rem}

\section{Results for fractional heat  kernel}
\label{sef1}In this section we give the proof of Theorem \ref{fcl16}. To begin with let us recall the estimates \eqref{fehe-1} of  $\|h_t^\alpha\|_p$.
\begin{proof}[\textbf{Proof of Theorem \ref{fcl16}}]
We first prove part $(a)$, that is, we assume that $p\in [1,2]$. Without loss of generality we may assume that $f \in C_c^\infty(G//K)$, for, using standard density arguments as in the proof of Theorem \ref{cl16}, we can always extend the result to $\mathcal L^p(G//K)$. From  \eqref{ehe-1} and \eqref{fehe-1} it is clear that for all sufficiently large $t$,
\begin{equation}\label{normfracest}
\|h_t\|_p \asymp t^{-a}e^{-tb},\:\:\: \:\: \|h_t^\alpha\|_p \asymp t^{-a} e^{-tb^\alpha},
\end{equation}
for some non-negative constants $a$, $b$ depending on $p$. Let $\epsilon >0$ be given. Utilizing Theorem \ref{cl16}, we get $M_1 >0$ such that
\begin{equation}\label{usingheat}
\|h_u\|_p^{-1}\| f \ast h_u-\what f(i \gamma_p \rho)h_u\|_p  < \epsilon,\hspace{0.7cm}\text{for all $u > M_1$.}
\end{equation}
From \eqref{fhss1} it follows that
\begin{equation*}
f \ast h_t^\alpha(x) -\what f(i \gamma_p \rho) h_t^\alpha(x)
= \int_{0}^\infty \left( f \ast h_u(x) -\what f(i \gamma_p \rho)h_u (x) \right) \, \eta_t^{\alpha}(u)\, du.
\end{equation*}
Thus, using the Minkowski integral  inequality, we get
\begin{equation}
\label{fhe1}
\|f \ast h_t^\alpha -\what f(i \gamma_p \rho) h_t^\alpha\|_p
\le\int_{0}^\infty \| f \ast h_u-\what f(i \gamma_p \rho)h_u\|_p  \, \eta_t^{\alpha}(u)\, du.
\end{equation}
After recalling $c_{\alpha}$ from \eqref{fehe-2}, we choose a positive number $ L$  such that
$$\delta :=c_\alpha L^{-\frac{\alpha}{(1-\alpha)}}-b^\alpha>0.$$
From \eqref{fhe1} we write
\begin{equation}
\label{fhe11}
\|h_t^\alpha \|_p^{-1}\|f \ast h_t^\alpha -\what f(i \gamma_p \rho) h_t^\alpha\|_p
\le A_t +B_t,
\end{equation}
where
\begin{eqnarray}
A_t
=\|h_t^\alpha \|_p^{-1} \int_{0}^{Lt} \| f \ast h_u-\what f(i \gamma_p \rho)h_u\|_p  \, \eta_t^{\alpha}(u)\, du,\label{fhe4}\\
 B_t
=\|h_t^\alpha \|_p^{-1} \int_{Lt}^\infty \| f \ast h_u-\what f(i \gamma_p \rho)h_u\|_p  \, \eta_t^{\alpha}(u)\, du\no.
\end{eqnarray}
Using \eqref{usingheat}, we have for all $t > \max\{1,M_1/L\}$,
\begin{eqnarray}
 B_t&<& \epsilon \|h_t^\alpha \|_p^{-1} \int_{Lt}^\infty \|h_u\|_p  \, \eta_t^{\alpha}(u)\, du \no\\
 &\lesssim & \epsilon  e^{tb^\alpha}  \int_{Lt}^\infty  e^{-ub}\, \eta_t^{\alpha}(u)\, du\no\\
 &&\hspace{2.5cm}\text{(using the estimates \eqref{normfracest}, and  $u^{-a}\lesssim t^{-a}$, as $u\geq Lt$)}\no
\\ &\lesssim & \epsilon  e^{tb^\alpha}  \int_{0}^\infty  e^{-ub}\, \eta_t^{\alpha}(u)\, du
\lesssim \epsilon, \label{fhe3}
\end{eqnarray}
where we have used \eqref{fhss2} in the last step. Since $$\| f \ast h_u-\what f(i \gamma_p \rho)h_u\|_p  \lesssim \|h_u\|_p \le u^{-a},$$ using \eqref{fehe-3} in \eqref{fhe4} we get
\begin{eqnarray}
 A_t
\lesssim \|h_t^\alpha \|_p^{-1} \int_{0}^{Lt}  u^{-a}\,  t^{a_1}(1+ u^{a_2})e^{-c_\alpha t^{\frac{1}{1-\alpha}}u^{-\frac{\alpha}{1-\alpha}} }\, du \label{fhe44}.
 \end{eqnarray}
Using the substitution
\begin{equation*}
	v=t^{\frac{\alpha}{1-\alpha}}u^{-\frac{\alpha}{1-\alpha}}
\end{equation*}
 in \eqref{fhe44}, we obtain constants $a_i\in\R$, $i=3,4,5,6$, such that
\begin{eqnarray*}
 A_t
&\lesssim&\|h_t^\alpha \|_p^{-1} t^{a_3} \int\displaylimits_{L^{-\frac{\alpha}{1-\alpha}}}^{\infty}  v^{a_4}\,  (1+ t^{a_5}v^{a_6})e^{-c_\alpha t v}\:dv\no \\
&\lesssim & t^{a_7} \int\displaylimits_{L^{-\frac{\alpha}{1-\alpha}}}^{\infty}  v^{a_4}\,  (1+ t^{a_5}v^{a_6})e^{-t(c_\alpha v-b^\alpha)}\, dv,
\end{eqnarray*}
where $a_7=a+a_3$, and we have used estimate of $\|h_t^\alpha \|_p$ (see \eqref{normfracest}). Making the substitution $u=c_\alpha v-b^\alpha$, in the right hand side of the inequality above and noting that $u+b^\alpha\asymp u$, we get
\begin{eqnarray}
A_t
&\lesssim&  t^{a_7} \int\displaylimits_{\delta}^{\infty}  u^{a_4}\,  (1+ t^{a_5}u^{a_6})e^{-tu }\, du \hspace{1.5cm}\text{(as $\delta=c_\alpha L^{-\frac{\alpha}{(1-\alpha)}}-b^\alpha$)}\no \\
&\lesssim&  t^{a_8} \int\displaylimits_{\delta t}^{\infty}  u^{a_4}\,  (1+ t^{a_9}u^{a_6})e^{-u }\, du,
\label{fhe45}
\end{eqnarray}
for some $a_8\in\R$, $a_9 \in \R$. From \eqref{fhe11}, \eqref{fhe3} and \eqref{fhe45} it follows that
\begin{equation}
\|h_t^\alpha \|_p^{-1}\|f \ast h_t^\alpha -\what f(i \gamma_p \rho) h_t^\alpha\|_p
\lesssim  \epsilon,
\end{equation}
for all sufficiently large $t$. This completes the proof of Theorem \ref{cl16}, $(a)$.

The proof of Theorem \ref{fcl16}, $(b)$ is analogous to that of Theorem \ref{cl16}, $(b)$ due to the convolution inequality \eqref{cks}
and \eqref{ep10}.
\end{proof}

\begin{rem}
\label{irem-1} \textup{Let $p \in [1,2]$ and $\alpha \in (0,1]$ be fixed and let $f \in \mathcal L^p(G//K)$ be non-negative. We note that
\begin{equation*}
\what f(i \gamma_p \rho)=\frac{ \|\what f(i \gamma_p \rho) h_t^\alpha\|_p}{\|h_t^\alpha\|_p}\le \frac{ \|f \ast h_t^\alpha -\what f(i \gamma_p \rho) h_t^\alpha\|_p}{\|h_t^\alpha\|_p} +\frac{ \|f \ast h_t^\alpha\|_p}{\|h_t^\alpha\|_p}.
\end{equation*}
Hence, by using \eqref{hqwe1} it follows from above that
\begin{equation}\label{hpjk1}
\what f(i \gamma_p \rho)-\frac{ \|f \ast h_t^\alpha -\what f(i \gamma_p \rho) h_t^\alpha\|_p}{\|h_t^\alpha\|_p}\le\frac{\|f \ast h_t^\alpha \|_p}{\|h_t^\alpha\|_p}\le \what f(i\gamma_p \rho),
\end{equation}
Letting $t$ tending to infinity in \eqref{hpjk1} we get
\begin{equation}
\label{hpjk2}
\what f(i \gamma_p \rho)=|||T_f|||_p=\lim_{t\to \infty} \frac{ \|f \ast h_t^\alpha\|_p}{\|h_t^\alpha\|_p}.
\end{equation}
The relation \eqref{hpjk2} precisely says that in order to get the operator norm of $T_f$ it is enough to look
at the asymptotic limit  of action of $T_f$ on the family of unit vectors $\{\|h_t^\alpha\|_p^{-1}h_t^\alpha \mid t>0\}$ instead of
whole unit sphere of $L^p(G//K)$. In other words, the family $\{\|h_t^\alpha\|_p^{-1}h_t^\alpha \mid t>0\}$ is an extremizer for the operator $T_f$.}
\end{rem}

\subsection{Counter example for the  case when $f$ is not $K$-biinvariant }
It was shown in  \cite {Vaz-2, AE} that Theorem \ref{cl16} is not true in general for functions $f\in L^1(X)$ which are not left $K$-invariant. Theorem \ref{fcl16} for the case $\alpha=1/2$,  $p=1$ was proved in \cite{Eff1, Eff2}, where the author has constructed a similar counterexample. Construction of such a counterexample (as given in \cite {Vaz-2, AE, Eff1, Eff2}) depends crucially on the $L^1$ concentration of the corresponding kernel (namely, heat kernel and the Poisson kernel) in the critical region along with their behaviour under left translations by elements of $G$, in the critical region. In this subsection we show by means of a counterexample that Theorem \ref{cl16} and Theorem \ref{fcl16} are not true in general for functions $f\in L^1(X)$ which are not left $K$-invariant. Our method of constructing such counterexamples for $p=1,2$ and $\alpha \in (0,1]$ is different from the above mentioned papers. Though we give counterexamples only for the cases $p=1$ and $p=2$, we believe that it should be possible to get counterexamples for other values of $p$ as well.

Let $0 \ne f \in L^1(G//K)$ be  a non-negative function. As $f$ is non-negative, it is clear that
$\what f(i \gamma_p \rho) \ne 0$ for all $p \in [1,2]$. Let $y \in G\setminus K$ and we define the right $K$-invariant function $g:G \to \C$ given by
\begin{equation}
g(x)= f(y^{-1}x),\:\:\:\:\:x\in G.
\end{equation}
We shall deal with two cases separately.

{\bf Case I ($L^1$-case).} Suppose that \begin{equation}\label{cec1}
 \lim_{t\to\infty}{\|g \ast h_t^\alpha-\widehat{g}(i\rho)h_t^\alpha\|_1}=0.
\end{equation}
We now use \eqref{helgs1} and \eqref{helgs2} to obtain the following inequality
\begin{eqnarray} \no
\int_{K/M}\left|e^{\langle 2\rho, A(k^{-1}y) \rangle}-1\right|\what f(i\rho)dk_M&=&
\int_{K/M}\left|\wtilde {g \ast h_t^\alpha}(i\rho, kM)-\widehat{g}(i\rho)\wtilde {h_t^\alpha}(i\rho, kM)\right|dk_M \\& \lesssim &\|g \ast h_t^\alpha-\widehat{g}(i\rho)h_t^\alpha\|_1.
\label{cec2}
\end{eqnarray}
From \eqref{cec1}, \eqref{cec2} and continuity of the map $k\mapsto A(k^{-1}y)$ it now follows that for all $k \in K$
\begin{equation}\label{cfr12}
e^{\langle 2\rho, A(k^{-1}y) \rangle}= 1.
\end{equation}
As $y \notin K$, we must have $ y= k_3\exp (y^+)k_4$ with $y^+\in \m a_+$. Therefore, $\langle \rho, y^+ \rangle$ is in $\R^+$. Now, taking
$k=k_3$ in \eqref{cfr12} we get
\begin{equation}
\label{cfr13}
1=e^{\langle 2\rho, A(k_3^{-1}y)\rangle}=e^{\langle 2\rho, y^+\rangle}>1,
\end{equation}
which is a contradiction. Hence, \eqref{cec1} must be false.

 {\bf Case-II ($L^2$-case).} We now suppose that
 \begin{equation}
 \label{as1}
  \lim_{t\to\infty}\frac{\|g \ast h_t^\alpha-\widehat{g}(0)h_t^\alpha\|_2}{\|h_t^\alpha\|_2}=0.
 \end{equation}

From Plancherel theorem \cite[Ch. III]{Helga-2} and the estimate \eqref{fehe-1} of $\|h_t^{\alpha}\|_2$,  it follows that
\begin{eqnarray}\label{fthq1}
&&\|h_t^\alpha\|_2^{-2}\|g \ast h_t^\alpha-\widehat{g}(0)h_t^\alpha\|^2_2 \no \\
&\asymp& t^{\frac \nu 2} \int_{K/M}  \int_{\m a} \left|e^{\left\langle -i\lambda+\rho, A(k^{-1}y)\right\rangle} \what f(\la)-\what f(0)\varphi_0(y)\right|^2 e^{-2t[(|\lambda|^2+|\rho|^2)^{\alpha}-|\rho|^{2\alpha}]}  |\hc(\lambda)|^{-2}\, d\lambda \,dk_M\no \\
 &\ge& t^{\frac \nu 2}  \int\displaylimits_{ K/M}  \int\displaylimits_{|\lambda| \le 1 } \left|e^{\left\langle -i\lambda+\rho, A(k^{-1}y)\right\rangle}\what f(\la) -\varphi_0(y)\what f(0)\right|^2 e^{-2t\left[(|\lambda|^2+|\rho|^2)^{\alpha}-|\rho|^{2\alpha}\right]} \no\\&&\hspace{2cm}\times |\hc(\lambda)|^{-2}\, d\lambda \,dk_M.
\label{fclhm2}
\end{eqnarray}
For $|\lambda| \le 1$, we also have
\begin{equation}
\label{f-1}
(|\lambda|^2+|\rho|^2)^{\alpha}-|\rho|^{2\alpha} \asymp |\lambda|^2.
\end{equation}
Hence, it follows that there exists positive constants $A$ and $B$ such that for all  $|\lambda| \le 1$
\begin{equation}
\label{f-2}
e^{-A t |\lambda|^2} \le e^{-2t\left[(|\lambda|^2+|\rho|^2)^{\alpha}-|\rho|^{2\alpha}\right]}  \le e^{-Bt |\lambda|^2}.
\end{equation}
Employing \eqref{f-2} in \eqref{fclhm2}, we get
\begin{eqnarray}\label{mhg23}
&&\|h_t^\alpha\|^{-2}_2\|g \ast h_t^\alpha-\widehat{g}(0)h_t^\alpha\|^2_2 \no \\
 &\ge& t^{\frac \nu 2}  \int_{ K/M}  \int_{|\lambda| \le 1 } \left|e^{\left\langle -i\lambda+\rho, A(k^{-1}y)\right\rangle} \what f(\la)-\what f(0)\varphi_0(y)\right|^2 e^{-At|\lambda|^2}  |\hc(\lambda)|^{-2}\, d\lambda \,dk_M  \no \\
  &\gtrsim& t^{\frac \nu 2}  \int_{ K/M}  \int_{|\lambda| \le 1 } \left|e^{\left\langle -i\lambda+\rho, A(k^{-1}y)\right\rangle} \what f(\la)-\what f(0)\varphi_0(y)\right|^2| e^{-At|\lambda|^2}\no\\&&\hspace{2cm}\times\left[ \prod_{\alpha \in \Sigma_r^+}| \langle \alpha, \lambda \rangle|^{2}(1+| \langle \alpha, \lambda \rangle|)^{{m_\alpha+m_{2\alpha}}-2}\right]\, d\lambda \,dk_M.
\end{eqnarray}
Using the change of variable $\lambda \to \frac{\lambda}{\sqrt t}$ on the right hand side of \eqref{mhg23} and then taking limit as $t \to \infty$ on both sides of \eqref{mhg23}, we get using \eqref{as1}
\begin{eqnarray}
 \int_{ K/M}  \int_{|\la |\leq 1} \left|e^{\left\langle \rho, A(k^{-1}y)\right\rangle} -\varphi_0(y)\right|^2|\what {f}(0)|^2 e^{-A|\lambda|^2}   \left[ \prod_{\alpha \in \Sigma_r^+}| \langle \alpha, \lambda \rangle|^{2}\right]\, d\lambda \,dk_M =0.
 \label{mnw1}
\end{eqnarray}
Since $\what f(0)$ is nonzero, from \eqref{mnw1} we get
\begin{equation}
\label{f-6}
\int_{ K/M}  \left|e^{\left\langle \rho, A(k^{-1}y)\right\rangle} -\varphi_0(y)\right|^2\, dk_M=0.
\end{equation}
It then follows, as before, that for all $k\in K$
\begin{equation*}
e^{\left\langle \rho, A(k^{-1}y)\right\rangle} = \varphi_0(y).
\end{equation*}
Now, choosing $k$ suitably (depending on $y$) and substituting in the relation above, as in the $L^1$-case, we obtain
\begin{equation*}
	1\geq \varphi_0(\exp y^+)=e^{\langle\rho,y^+\rangle}>1,
\end{equation*}
which is  a contradiction. Hence, the supposition \eqref{as1} is false.

\section{Rank one results: Optimality and counterexamples}
\label{opc}
In this section, we shall first discuss about the optimality of the rate of convergence in Theorem \ref{cl16} for rank one symmetric spaces of noncompact type. For higher rank symmetric spaces or for $p\in[1,2)$, we still don't know whether  such optimality in Theorem \ref{cl16}  holds or not.  However, we show that if $p\in[2,\infty]$ and $f \in L^q(G//K)$ for some $q\in[1,2)$, then it is possible to get an improved version of  Theorem \ref{cl16} (see Proposition \ref{sness-1}(a)). We also show that the result is sharp for $p=2$ (see Proposition \ref{sness-1}(b)) provided we assume  $f \in L^q(G//K)$ for some $q\in[1,2)$. Since Theorem \ref{cl16} is true for a much larger class of functions, namely $f\in \mathcal L^p(G//K)$, at this moment we do not know anything about its sharpness in this larger class. From now onwards, we assume that $X$ is a rank one symmetric space of noncompact type, that is, $G$ is a connected noncompact semisimple Lie group of real rank one. Thus the Lie group $A$ is isomorphic to $\R$ and by fixing a suitable isomorphism, we write $A =\{ a_t \mid t \in \R \}$. We will need the following facts for proving Proposition \ref{sness-1}.

For a suitable $K$-biinvariant  function $f$ on $G$,  the Abel transform of $f$ is defined by
$$\mathscr Af(t)=e^{\rho t }\int_{N}f(a_tn)\, dn,\,\,\,\,\,\,t \in \R.$$
For a nice function $g$ on $\R$, let
$$\mathscr{F}g(\lambda)=\int_{\R }g(s) e^{-i\lambda s}\, ds, \,\,\,\,\,\,\lambda \in \R.$$
be the classical Fourier transform of $g$,

Let $f\in L^p(G//K)$ for $p\in[1,2)$. Then
 $\what f(\lambda)$ exists for all $\lambda  \in \C$ with $|\Im  \lambda| < \gamma_p \rho$, and
 defines a holomorphic function there. For such a function $f$, its Abel transform $\mathscr Af$ exists as an even function on $\R$ with
\begin{equation}
\int_{\R}|\mathscr Af(t)|e^{\beta|t|}\, dt \lesssim \|f\|_p,
\label{oat1}
\end{equation}
for any $0\le \beta <\gamma_p \rho$, and
\begin{equation}
\mathscr{F} (\mathscr Af)(\lambda)= \what f (\lambda),\,\,\,\,\,\,\text{whenever}\,\,\,|\Im \lambda| < \gamma_p \rho.
\label{oat2}
\end{equation}
We observe that the $\hc$-function has the following behaviour in rank one symmetric space of noncompact type(see \eqref{cfest}):
\begin{equation}
\label{cfest10}
|\hc(\lambda)| \asymp |\lambda|^{-1}(1+| \lambda|)^{-\frac{n-3}2}, \,\,\,\,\,\,\text{whenever}\,\,\,\Im \lambda \le 0.
\end{equation}
\begin{proposition}
\label{sness-1}
 Let $\theta:(0,\infty) \to [0,\infty) $.
\begin{itemize}
\item [(a)] Suppose $\lim_{t \to \infty}\theta(t)/t=0$, $q \in [1,2)$, $p\in[2,\infty]$.
Then for any $f \in L^q(G//K)$,
\begin{equation*}
\lim_{t \to \infty}{\theta(t)}\|h_t\|_p^{-1}\|f \ast h_t-\what f(0)h_t\|_p=0.
\end{equation*}
\item [(b)] Suppose $\lim_{t \to \infty}\theta(t)/t>0$. Then there exists  $f \in L^q(G//K)$
 with $q\in[1,\infty]$ such that
\begin{equation*}
\limsup_{t \to \infty}{\theta(t)}\|h_t\|_2^{-1}\|f \ast h_t-\what f(0)h_t\|_2 > 0.
\end{equation*}
\end{itemize}
\end{proposition}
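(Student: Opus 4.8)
The plan is to reduce both parts to the case $p=2$, where the Plancherel theorem turns the problem into a weighted Gaussian integral. Writing $Q_p(t)=\|h_t\|_p^{-1}\|f\ast h_t-\what f(0)h_t\|_p$, I would prove for $f\in L^q(G//K)$, $q\in[1,2)$, the upper rate $Q_2(t)\lesssim t^{-1}$ (this gives part $(a)$) and a matching lower rate $Q_2(t)\gtrsim t^{-1}$ along $t\to\infty$ for a suitable $f$ (this gives part $(b)$). For part $(a)$, since $\lim_{t\to\infty}\theta(t)/t=0$, the bound $Q_2(t)\lesssim t^{-1}$ suffices: the range $p\in(2,\infty]$ follows from inequality \eqref{ra1} (valid here because $L^q(G//K)\subset\mathcal L^2(G//K)$ for $q<2$), which gives $Q_p(t)\lesssim Q_2(t/2)\lesssim t^{-1}$, so that $\theta(t)Q_p(t)\lesssim\theta(t)/t\to0$.

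For the rate at $p=2$ I would start from the Plancherel computation of Proposition \ref{p=2} (see \eqref{flhm2}); in the rank one case, after the substitution $\lambda\mapsto\lambda/\sqrt t$ and the $\hc$--estimate \eqref{cfest10}, it reads
\[
Q_2(t)^2\asymp\int_{\R}\left|\what f\!\left(\tfrac{\lambda}{\sqrt t}\right)-\what f(0)\right|^2 e^{-2\lambda^2}\lambda^2\left(1+\tfrac{|\lambda|}{\sqrt t}\right)^{n-3}d\lambda .
\]
Two features produce the factor $t^{-2}$. First, $\what f$ is Weyl invariant, hence even, so $\what f'(0)=0$; as $\what f$ is holomorphic near $0$ (because $\gamma_q\rho>0$), this gives $|\what f(\lambda)-\what f(0)|\lesssim\lambda^2$ for $|\lambda|\le1$. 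Second, $\what f$ is bounded on $\R$: since $q<2$ forces $\varphi_0\in L^{q'}(G//K)$ (by \eqref{spest}) and $|\varphi_{-\lambda}|\le\varphi_0$ for real $\lambda$, one has $|\what f(\lambda)|\le\|f\|_q\|\varphi_0\|_{q'}$. Splitting the integral at $|\lambda|=\sqrt t$, the $\lambda^2$--bound handles $|\lambda|\le\sqrt t$ and mere boundedness against the Gaussian handles $|\lambda|>\sqrt t$; the integral is $O(t^{-2})$, so $Q_2(t)\lesssim t^{-1}$, completing $(a)$.

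For part $(b)$, since $\lim_{t\to\infty}\theta(t)/t>0$ it suffices to exhibit one $f$ with $\liminf_{t\to\infty}tQ_2(t)>0$; then $\theta(t)Q_2(t)=(\theta(t)/t)(tQ_2(t))$ has positive lower limit, so a fortiori positive $\limsup$. I would take any nonzero, nonnegative $f\in C_c^\infty(G//K)$, which lies in every $L^q(G//K)$. The crucial point is $\what f''(0)\neq0$: by \eqref{oat2}, $\what f=\mathscr F(\mathscr A f)$, so $\what f''(0)=-\int_\R s^2\,\mathscr A f(s)\,ds$, and $\mathscr A f\ge0$, $\mathscr A f\not\equiv0$ make this strictly nonzero. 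Returning to the displayed identity, multiply by $t^2$ and split at $|\lambda|=\sqrt t$: on $|\lambda|\le\sqrt t$ the $\lambda^2$--bound provides the integrable dominating function $C\lambda^6(1+|\lambda|)^{(n-3)_+}e^{-2\lambda^2}$ while the integrand converges pointwise to $\tfrac14|\what f''(0)|^2\lambda^6 e^{-2\lambda^2}$, and the tail $|\lambda|>\sqrt t$ contributes $O(t^2e^{-t})\to0$ by boundedness of $\what f$. Dominated convergence then yields $t^2Q_2(t)^2\to c\,|\what f''(0)|^2>0$, the desired lower bound.

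The main obstacle is the tail of the Gaussian integral in part $(a)$: evenness of $\what f$ only controls $\what f(\lambda)-\what f(0)$ for small $\lambda$, so reaching the \emph{uniform} $t^{-1}$ rate requires a global bound on $\what f$ over all of $\R$. This is precisely where the hypothesis $q<2$ (rather than merely $f\in\mathcal L^2(G//K)$) enters, through $\varphi_0\in L^{q'}(G//K)$, and it also explains why sharpness is asserted only in this smaller class. The remaining care points are routine: justifying the Plancherel identity and the dominated convergence uniformly in $t$, and keeping track of the rank one specialization $\nu=3$ and $m_\alpha+m_{2\alpha}-2=n-3$.
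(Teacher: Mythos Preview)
Your proposal is correct and follows the same broad Plancherel-plus-rescaling scheme as the paper, but the two proofs differ in how the key technical steps are executed.

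For part (a) at $p=2$, the paper does not split the integral. Instead it invokes the Abel transform relation $\mathscr F(\mathscr A f)=\what f$ to bound $(\what f)''$ \emph{globally} on $\R$ by $\|s^2\mathscr A f(s)\|_{L^1(\R)}\lesssim\|f\|_q$ (using \eqref{oat1}), and then Taylor's theorem with Lagrange remainder gives $|\what f(\lambda/\sqrt t)-\what f(0)|\le (\lambda^2/t)\|(\what f)''\|_\infty$ for every $\lambda$. Your route---local $O(\lambda^2)$ from evenness plus holomorphy on $|\lambda|\le\sqrt t$, global boundedness of $\what f$ plus Gaussian decay on $|\lambda|>\sqrt t$---avoids the Abel transform here and is arguably more elementary; the paper's route gives a bound that is quantitative in $\|f\|_q$, though that extra information is not needed for the statement.

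For part (b) the situation is reversed: the paper takes the explicit choice $f=h_1$, so that $|\what f(\lambda)-\what f(0)|^2=e^{-2\rho^2}(1-e^{-\lambda^2})^2\asymp\lambda^4/(1+\lambda^2)^2$, and reads off the $t^{-2}$ rate by the same rescaling. Your choice of a nonzero nonnegative $f\in C_c^\infty(G//K)$ with the observation $\what f''(0)=-\int_\R s^2\mathscr A f(s)\,ds\ne0$ (now using the Abel transform) is more conceptual and works just as well. Both examples lie in every $L^q(G//K)$, as required.
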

\begin{proof}
\begin{itemize}

\item[(a)]We first consider the case $p=2$. From  Plancherel formula, \eqref{ehe-1} and \eqref{cfest10} it follows that
\begin{eqnarray}
&\,&\|h_t\|_2^{-2}\|f \ast h_t- \what f(0) h_t\|_2^2 \no \\
&\asymp& t^{\frac 3 2}  e^{2t\rho^2}  \int_0^\infty |\what {f}(\lambda)-\what f(0)|^2 e^{-2t(\lambda ^2+\rho ^2)}  |\hc(\lambda)|^{-2}\, d\lambda \no \\
&\asymp& t^{\frac 3 2} \int_{0}^\infty |\what {f}(\lambda)-\what f (0)|^2 e^{-2t \lambda ^2}
\lambda^{2}(1+ \lambda )^{{n-3}}  \, d\lambda \label{hsre1}\\
&\asymp & \int_{0}^\infty \left|\what {f}\left(\frac{\lambda}{\sqrt t}\right)- \what f(0)\right|^2 e^{-2 \lambda^2}
\lambda^{2}\left(1+\frac{\lambda}{\sqrt t}\right)^{{n-3}}  \, d\lambda.
\label{flhm200}
\end{eqnarray}
Since $\what f$ is an even real analytic function, by expanding $\what f$  around zero, we get
\begin{equation}
\what {f}\left(\frac{\lambda}{\sqrt t}\right)- \what f(0)= \frac{ \lambda^2}{t}\,({\what f})^{''}(s_{\lambda,t} ),
\label{soe12}
\end{equation}
for some $s_{\lambda, t} \in \R$ depending on $\lambda, t$.
We define a function $h: \R\to \C$ by
\[h(s)=s^2\mathscr A f(s), \,\,\,\, s \in \R.\]
Owing to \eqref{oat1} and \eqref{oat2} we obtain
\begin{equation*}
|(\what f )^{''}( u)|=|(\mathscr{F}(\mathscr A f))^{''}(u)|
= |\mathscr{F}(h)(u)|
\leq\|\mathscr{F}(h)\|_{L^{\infty}(\R)}.
\end{equation*}
Hence
\begin{equation}\label{seq1}
|(\what f)^{''}( u)|\le \|h\|_{L^{1}(\R)}
=\int_{\R}|\mathscr A f(s)|s^2\, ds \lesssim \int_{\R}|\mathscr A f(s)|e^{\frac{\gamma_q\rho }{2}s }\, ds \lesssim \|f\|_q.
\end{equation}
Hence from \eqref{flhm200}, \eqref{soe12} and \eqref{seq1}  we get
\begin{equation}\label{ssmeq32}
\|h_t\|_2^{-2}\|f \ast h_t- \what f(0) h_t\|_2^2
\lesssim\frac{\|f\|_q}{t^2}\int_{0}^\infty e^{-2 \lambda^2}
\lambda^{6}\left(1+\frac{\lambda}{\sqrt t}\right)^{{n-3}}  \, d\lambda.
\end{equation}
Multplying by $\theta(t)$ on both sides of  the inequality above, we obtain
\begin{equation}\label{sseq32}
\frac{\theta(t)^2\|f \ast h_t- \what f(0) h_t\|_2^2}{\|h_t\|_2^2}
\lesssim\left(\frac{\theta(t)}{t}\right)^2\int_{0}^\infty e^{-2 \lambda^2}
\lambda^{6}\left(1+\frac{\lambda}{\sqrt t}\right)^{{n-3}}d\lambda.
\end{equation}
Since $\theta(t)/t$ converges to zero as $t$ tends to infinity and for all $t>1$
\begin{equation*}
\left(1+\frac{\lambda}{\sqrt t}\right)^{{n-3}} \le \left(1+{\lambda}\right)^{(n-3)_+},
\end{equation*}
taking limit as $t$ tends to infinity in \eqref{sseq32} we get the desired result.
We now assume that $p >2$ and define $\psi(t)= \theta(2t)$, $t\in (0,\infty)$. From the inequality \eqref{ra1} we have
\begin{equation*}
\frac{\theta(t)\|f \ast h_t-\what f(0)h_t\|_p}{\|h_t\|_p}\lesssim {\psi\left(\frac t2\right)}\frac{\|f\ast h_{t/2}-\what f(0)h_{t/2}\|_2}{\|h_{t/2}\|_2}.
\end{equation*}
The result now follows from the case $p=2$.
\item[(b)] Let $f=h_1$, where $h_1$ is the heat kernel at time $t=1$. From \eqref{hsre1} we get
\begin{eqnarray}
&\,&\|h_t\|_2^{-2}\|f \ast h_t- \what f(0) h_t\|_2^2 \no \\
&\asymp& t^{\frac 3 2} \int_{0}^\infty |e^{-(\lambda ^2+\rho ^2)}-e^{-\rho ^2}|^2 e^{-2t \lambda ^2}
\lambda^{2}(1+ \lambda )^{{n-3}}  \, d\lambda \no \\
&\asymp& t^{\frac 3 2} \int_{0}^\infty (1-e^{-\lambda ^2})^2 e^{-2t \lambda ^2}
\lambda^{2}(1+ \lambda )^{{n-3}}  \, d\lambda \label{hsre1m}.
\end{eqnarray}
Using the elementary estimate
\begin{equation*}
1-e^{-x} \asymp \frac{x}{1+x},\:\:\:\:\:\:x\in [0,\infty),
\end{equation*}
  in \eqref{hsre1m} we get
\begin{eqnarray}
&\,&\|h_t\|_2^{-2}\|f \ast h_t- \what f(0) h_t\|_2^2 \no\\
&\asymp& t^{\frac 3 2} \int_{0}^\infty \frac{\lambda^4}{(1+\lambda^2)^2}e^{-2t \lambda ^2}
\lambda^{2}(1+ \lambda )^{{n-3}}  \, d\lambda  \no
\\ &\asymp& \frac{1}{t^2}\int_{0}^\infty \frac{\lambda^4}{\left(1+\frac{\lambda^2}{t}\right)^2}e^{-2 \lambda ^2}
\lambda^{2}\left(1+ \frac{ \lambda }{\sqrt t}\right)^{{n-3}}  \, d\lambda \label{hsre3m}.
\end{eqnarray}
Hence,  it follows from \eqref{hsre3m} that
\begin{eqnarray}
&\,&\limsup_{t\to \infty}\theta(t)^2\|h_t\|_2^{-2}\|f \ast h_t- \what f(0) h_t\|_2^2 \asymp \limsup_{t \to \infty}\left(\frac{\theta(t)}{t}\right)^2\int_{0}^\infty {\lambda^6}e^{-2 \lambda ^2}
\, d\lambda >0.\no
\end{eqnarray}
\end{itemize}
This completes the proof.\end{proof}

\subsection{Counter example for ball average }
\label{ballc}
We recall that
\begin{equation*}
m_r(x)= \frac {\chi_{B(o, r)}(x)}{|B(o, r)|},\:\:\:\:\:x\in X,
\end{equation*}
and for any $f \in L^1(\R^n)$ (see \eqref{hn})
\begin{equation*}\lim_{r \to \infty}\|f \ast m_r-M(f)m_r\|_{L^1(\R^n)}=0.
\end{equation*}
However, as we will see, this result is not true in rank one symmetric space of noncompact type $X$. Indeed, we will show that  for any $z \in \C$ and for any left $K$-invariant, non-zero, non-negative $f \in L^1(X)$
\begin{equation}
\label{bcq1}
\displaystyle\lim_{r \to \infty}\|m_r\|_p^{-1}\|f \ast m_r- z\,m_r\|_{p}\neq 0,
\end{equation}
for all $p\in[1, \infty]$. Thus, the exact analogue  of Theorem \ref{cl16} is not true on $X$ which vindicates that the heat kernel $h_t$ is a much better kernel than $m_r$ on symmetric spaces in contrasts to the Euclidean spaces where both kernels have similar asymptotic behaviour. In order to show \eqref{bcq1}  we will need the following facts about the kernel $m_r$.

Using  description of the Haar measure \eqref{ch1}-\eqref{ch2} it can be shown that
\begin{equation}
\label{mre34}
\|m_r\|_p \asymp e^{-\frac{ 2\rho r}{p'}},\quad\text{for all $r>1$}.
\end{equation}
Let
\begin{equation*}
\psi_{\lambda}(r)= \int_{X} m_r(x) \varphi_{-\lambda}(x)\, dx
\end{equation*}
be the spherical Fourier transform of $m_r$ at $\lambda$. Since $\varphi_\lambda= \varphi_{-\lambda}$, it is clear that $\psi_\lambda= \psi_{-\lambda}$. It is known that $\psi_\lambda(r)$ can be expressed in terms of the Jacobi functions  \cite[(2.4.3)]{MS-1} and using the asymptotics for Jacobi functions (see \cite[(2.3.2)--(2.3.3)]{MS-2}), it can be shown that for $\lambda \in \C$ with $\Im \lambda < 0$,
\begin{equation}
 \label{baeq1}
  \lim_{r\to \infty }e^{-(i \lambda -\rho)r}\psi_\lambda(r) =  \hc_b(\lambda),
\end{equation}
 where  $\hc_b(\lambda)$  is an analogue of the Harish-Chandra $\hc$-function which has neither zero nor pole in the region  $\Im \lambda<0$.

Now we are ready to prove \eqref{bcq1}. Let $ f \in L^1(G//K)$ be a positive function. As $f$ is positive, it follows that
\begin{equation}
\label{cbs3}
|\what f(a-i\gamma_p\rho)| < \what f(-i\gamma_p\rho),
\end{equation}
for any $a \in \R\setminus\{0\}$, $p\in[1,2)$ (see \cite[Proposition 2.5.2(a)]{MS-1}). We shall prove \eqref{bcq1} case by case.

\noindent {\bf Case I:} Let $p=1$, so that $\gamma_1=1$. Suppose \begin{equation}
\label{cbs1}
\lim_{r \to \infty}\|f \ast m_r- z\,m_r\|_{1}= 0.
\end{equation}
We know that
\begin{equation}
\label{cbs2}
|\what f(a-i\rho)-z||\psi_{a-i\rho}(r)| \le \|f \ast m_r- z\,m_r\|_{1},
\end{equation}
for all $a\in\R$. In view of \eqref{baeq1},  we have
\begin{equation*}
	 \lim_{r \to \infty}|\psi_{a-i\rho}(r)|>0,
\end{equation*}
for all $a\in\R.$ Using this observation together with \eqref{cbs1} and \eqref{cbs2} we get
\begin{equation*}
\what f(a-i\rho)= z,
\end{equation*}
for all $a \in \R$. This clearly contradicts \eqref{cbs3}. Hence,
\begin{equation}
\lim_{r \to \infty}\|f \ast m_r- z\,m_r
\|_{1}\neq 0.
\end{equation}
{\bf Case II:}\:\:
Let $p \in (1,2)$. We fix a sequence $\{r_n\}$ of positive numbers such that $r_n \uparrow \infty$.
Using \eqref{mre34} and \eqref{baeq1}, we note that
\begin{equation}\label{defnliminf}
h(\la)=\liminf_{n \to \infty}\frac{|\psi_{\lambda-i\gamma_p\rho}(r_n)|}{\|m_{r_n}\|_p}>0,\:\:\:\:\:\text{for all}\:\:\la>0.
\end{equation}
Using the Hausdorff--Young inequality \eqref{hyineq} with $F=(f-z)\ast m_r$, we get
\begin{equation}\label{hyineqappl}
	\left(\int_0^\infty |\what f(\lambda-i\gamma_p \rho )-z|^{p'} |\psi_{\lambda-i\gamma_p\rho}(r_n)|^{p'}\tilde{\mu}(\la)\:d\la\right)^{\frac{1}{p'}}\lesssim\|f \ast m_{r_n}- z\,m_{r_n}	\|_p.
\end{equation}
Applying Fatou's Lemma and using the description \eqref{defnliminf} of $h$, we obtain
\begin{eqnarray}\label{fatou}
	&&\int_0^\infty |\what f(\lambda-i\gamma_p \rho )-z|^{p'} h(\lambda)^{p'}\tilde{\mu}(\la)\:d\la\no\\
	&\leq&\liminf_{n \to \infty}\|m_{r_n}\|_p^{-p'}\int_0^\infty |\what f(\lambda-i\gamma_p \rho )-z|^{p'}|\psi_{\lambda-i\gamma_p\rho}(r_n)|^{p'}\tilde{\mu}(\la)\:d\la\no\\
	&\lesssim&\liminf_{n \to \infty}\|m_{r_n}\|_p^{-p'}\|f \ast m_{r_n}- z\,m_{r_n}	\|_p^{p'},
\end{eqnarray}
where we have used \eqref{hyineqappl} in the last step. If
\begin{equation*}
\lim_{r \to \infty}\|m_r\|_p^{-1}\|f \ast m_r- z\,m_r\|_{p}=0,
\end{equation*}
then it follows from \eqref{fatou} and \eqref{defnliminf} that $\what f(\lambda-i\gamma_p \rho )$ equals $z$, for almost every \\$\lambda\in (0,\infty)$. Using continuity of $\what f$ we conclude that for all $\lambda \in (0,\infty)$
\begin{equation*}
\what f(\lambda-i\gamma_p \rho )=z,
\end{equation*}
which again contradicts \eqref{cbs3}.

\noindent {\bf Case III:}\:\:Let $p\in[2,\infty]$. We fix $ q\in[1,2)$, and choose $\theta\in[0,1)$, such that
\begin{equation*}
 q^{-1} = (1- \theta) +\theta p^{-1};
\end{equation*}
that is, $$\frac{1}{q'}=\frac{\theta}{p'}.$$
Using convexity of norms, we get
\begin{eqnarray}
\no
\|f \ast m_r- z\,m_r\|_{q} &\le& \|f \ast m_r- z\,m_r\|^{1-\theta}_{1} \|f \ast m_r- z\,m_r\|^{\theta}_{p}\\& \lesssim &\|m_r\|_{1}^{1-\theta}\|f \ast m_r- z\,m_r\|^{\theta}_{p}\no\\&\lesssim&\|f \ast m_r- z\,m_r\|^{\theta}_{p}. \label{mref}
\end{eqnarray}
From \eqref{mre34} we observe that for all large $r$
\begin{equation*}
  \|m_r\|_q\asymp\|m_r\|_p^{\theta}.
\end{equation*}
Using this observation in \eqref{mref} we obtain
\begin{eqnarray}
\label{mref1}
\|m_r\|_q^{-1}\|f \ast m_r- z\,m_r\|^q_{q} \lesssim \|m_r\|_p^{-\theta}\|f \ast m_r- z\,m_r\|^{\theta}_{p}.
\end{eqnarray}
Now, if  $$\lim_{r \to \infty}\|m_r\|_p^{-1}\|f \ast m_r- z\,m_r\|_{p}=0,$$
from  \eqref{mref1} it follows that $$\lim_{r \to \infty}\|m_r\|_q^{-1}\|f \ast m_r- z\,m_r\|_{q}=0,$$ which is clearly not true in view of Case I and Case II, as $q\in[1,2)$. Thus,
\begin{equation}
	\displaystyle\lim_{r \to \infty}\|m_r\|_p^{-1}\|f \ast m_r- z\,m_r\|_{p}\neq 0.
\end{equation}

\section*{Acknowledgments}
The authors thank the referee for a careful reading of the manuscript and for the valuable comments, which have improved the exposition of the paper. The authors also wish to thank Mithun Bhowmik, Manjunath Krishnapur, and Sundaram Thangavelu for several useful discussions. Muna Naik and Jayanta Sarkar are supported by the INSPIRE faculty fellowship (Faculty Registration No.: IFA20-MA151, Faculty Registration No.: IFA22-MA-172) from the Department of Science and Technology, Government of India.

\end{document}